\newcommand{\footlabel}[2]{%
    \refstepcounter{footnote}\label{#1}%
    \footnotetext[\thefootnote]{#2}%
    $^{\ref{#1}}$%
}
\patchcmd{\@sect}{\@addtoreset{footnote}{#1}}{}{}{}
\let\latexchi\chi
\renewcommand\chi{\@ifnextchar_\sub@chi\latexchi}
\newcommand{\sub@chi}[2]{% #1 is _, #2 is the subscript
  \@ifnextchar^{\subsup@chi{#2}}{\latexchi_{\raisebox{-.7ex}{\ensuremath{\scriptstyle #2}}}}%
}
\newcommand{\subsup@chi}[3]{% #1 is the subscript, #2 is ^, #3 is the superscript
  \latexchi_{\raisebox{-1ex}{\ensuremath{\scriptstyle #1}}}^{#3}%
}
\theoremstyle{definition}
\newtheorem{thm}{Theorem}[section]
\newtheorem{prop}[thm]{Proposition}
\newtheorem{lem}[thm]{Lemma}
\newtheorem{cor}[thm]{Corollary}
\newtheorem{defn}[thm]{Definition}
\newtheorem{ex}[thm]{Example}
\theoremstyle{remark}
\newtheorem{rmk}{Remark}[section]
\title{Convex Geometry of Building Sets}
\author{Spencer Backman}
\email{sbackman@uvm.edu}
\author{Richard Danner}
\email{rsdanner@uvm.edu}
\begin{document} 
\begin{abstract}
Building sets were introduced in the study of wonderful compactifications of hyperplane arrangement complements and were later generalized to finite meet-semilattices.   Convex geometries, the duals of antimatroids, offer a robust combinatorial abstraction of convexity.  Supersolvable convex geometries and antimatroids appear in the study of poset closure operators, Coxeter groups, and matroid activities.  
We prove that the building sets on a finite meet-semilattice form a supersolvable convex geometry.  As an application, we demonstrate that building sets and nested set complexes respect certain restrictions of finite meet-semilattices, unifying and extending results of several authors.

\end{abstract}

\maketitle
\thispagestyle{empty}
\section{Introduction}

De Concini and Procesi's celebrated wonderful compactifications of a hyperplane arrangement complement proceed by blow ups of certain flats of the arrangement \cite{de1995wonderful}.  There are choices for which flats are blown up, and each set of choices is encoded by a \emph{building set}.  Feichtner and Kozlov \cite{feichtner2004incidence} extended the notion of a building set from the lattice of flats of a hyperplane arrangement to arbitrary finite meet-semilattices\footnote{Henceforth, all posets in this paper will be assumed to be finite.}, and observed that this generalization admits connections to other topics such as stellar subdivisions of fans.  

The Bergman fan of a matroid was introduced by Ardila-Klivans \cite{ardila2006bergman} as a matroidal generalization of the tropicalization of a linear space.  Feichtner-Sturmfels \cite{feichtner2005matroid} showed that the nested set complex associated to a building set for the lattice of flats gives a unimodular triangulation of the Bergman fan.  In seminal work of Adiprasito-Huh-Katz \cite{adiprasito2018hodge}, the Chow ring of a matroid, introduced earlier by Feichtner-Yuzvinsky \cite{feichtner2004chow}, was utilized for settling the Heron-Rota-Welsh conjecture on the log-concavity of the coefficients of the characteristic polynomial of a matroid.  Although Feichtner-Yuzvinsky's construction is defined for arbitrary building sets, Adiprasito-Huh-Katz found that the maximum building set was sufficient for their goals.\footnote{While the Bergman fans of \cite{adiprasito2018hodge} are not Bergman fans induced by building sets on the lattice of flats, we note a connection in footnote  \ref{oderfilt}.} Much of the subsequent work by researchers studying Chow rings of matroids has been focused on the maximum building set with a few important notable exceptions: 
the conormal fan of a matroid\footnote{The biflats, viewed as a subset of $\mathcal{L}(M)^{op}\times \mathcal{L}(M^{\perp})$, are closed under taking joins and thus satisfy condition (\ref{bscon2}) of Proposition \ref{usefulchar}.  While the biflats do not satisfy condition (\ref{bscon1}) of Proposition \ref{usefulchar}, the image of the biflats in $\Sigma_M \times \Sigma_{M^{\perp}}$ contains the image of the irreducibles.  As the biflats are closed under taking joins, the would-be nested set complex associated to the biflats is the order complex, of which the conormal fan is a distinguished codimension-2 subcomplex.} \cite{ardila2023lagrangianC, ardila2023lagrangian,nathanson2023topology},
the augmented Bergman fan of a matroid \cite{braden2020singular,braden2022semi, eur2023stellahedral,liao2022stembridge, mastroeni2023chow}, the Bergman fan of a polymatroid \cite{crowley2022bergman, eur2023intersection,pagaria2023hodge}\footnote{It follows from each of the works \cite{amini2020hodge, ardila2023lagrangianC, pagaria2023hodge} that the K\"ahler package holds for the Chow ring of a matroid with respect to an arbitrary building set.}, and a few additional works \cite{ 
coron2022matroids,coron2025supersolvability,eur2025building}.

Perhaps surprisingly, there has been no work exploring the systematic construction of  building sets for the lattice of flats of a matroid -- this apparent gap in the literature was the genesis of this paper. Although our primary motivation is the study of matroids, we have found that our results hold for arbitrary meet-semilattices.  
In their original article, de Concini-Procesi provided three different equivalent descriptions of a building set \cite{de1995wonderful}.  Their most elegant characterization was popularized by Postnikov \cite{postnikov2009permutohedra} and Feichtner-Sturmfels \cite{feichtner2005matroid} for the Boolean lattice, but has largely been overlooked by researchers studying matroids.  We begin by extending this characterization to a general meet-semilattice $\mathcal{L}$, e.g. a geometric lattice.  
We apply this result to demonstrate that the building sets for $\mathcal{L}$ form an intersection lattice.  It follows that given an arbitrary subset $X \subseteq \mathcal{L}$, there exists a smallest building set $B$ which contains $X$, thus the building sets for $\mathcal{L}$ determine a closure operator; this generalizes a result of Feichtner-Sturmfels \cite{feichtner2005matroid} from the Boolean lattice to general meet-semilattices.   

Convex geometries are a special class of closure systems introduced independently by Edelman  and Jamison \cite{edelman1980meet,edelman1985theory,jamison1982perspective} providing a combinatorial abstraction of convex hulls in finite point configurations and upper ideals in posets. A fundamental result of Bj\"orner \cite{bjorner1983matroids} states that convex geometries are dual to antimatroids, which are an important class of greedoids. 
Supersolvable antimatroids and convex geometries were introduced implicitly in the work of Stanley \cite{stanley1972supersolvable}, and were developed in earnest by Armstrong \cite{armstrong2009sorting}.  They arise naturally in the study of poset closure operators \cite{hawrylycz1993lattice}, Coxeter groups \cite{armstrong2009sorting}, and matroid activities \cite{gillespie2020convexity}.  We prove that the building sets on a meet-semilattice form a supersolvable convex geometry.

We demonstrate the utility of our results  by proving that the building sets and nested set complexes behave well with respect to restrictions of meet-semilattices.  This result unifies the work of Mantovani-Padrol-Pilaud on oriented matroids \cite{mantovani2025facial} with Bergman fan constructions arising in the study of Hodge theory for matroids due to Braden-Huh-Matherne-Proudfoot-Wang \cite{braden2022semi},  Crowley-Huh-Larson-Simpson-Wang \cite{crowley2022bergman}, and Eur-Larson \cite{eur2023intersection}.  

\section{Semilattices and Building Sets}

In this section we review the theory of posets, semilattices, and building sets.  We prove Proposition \ref{usefulchar} which extends a characterization of building sets due to de Concini and Procesi  from intersection lattices of subspace arrangements to general meet-semilattices.  The section concludes with Proposition \ref{lattice} which states that the collection of building sets on a meet-semilattice ordered by inclusion forms an intersection lattice, generalizing an observation of Feichtner-Sturmfels from the Boolean lattice to general meet-semilattices.

\begin{defn}
A  \emph{partially ordered set}, or \emph{poset}, is an ordered pair $(\mathcal{P}, \leq)$ where $\mathcal{P}$ is a set and $\leq$ is a relation such that for all $x,y,z \in \mathcal{P}$ 

\begin{enumerate}
    \item $x \leq x$ 
    
\item $x \leq y$ and $y \leq x$ implies $x=y$ 

\item  $x\leq y$ and $y \leq z$ imply $x \leq z$ 
\end{enumerate}
\end{defn}

We write $x<y$ if $x\leq y$ and $x\neq y$, and we define $x>y$ similarly.  We say that $y$ \emph{covers} $x$ if $x <y$ and there is no element $z \in \mathcal{P}$ such that $x<z<y$.  We define the subset $\mathcal{P}_{\leq x}$ to be $\{z \in \mathcal{P}:z \leq x\}$.  
For a subset $B$ of $\mathcal{P}$ with the induced order, and $x \in \mathcal{P}$, we define $B_{\leq x}=\{z \in B: z \leq x\}$, and $\text{max}\,B_{\leq x}$ as the set of maximal elements in $B_{\leq x}$ with respect to $\leq$. An interval $[x,y] \subseteq \mathcal{P}$ is the set of elements $z\in \mathcal{P}$ such that $x \leq z \leq y$. 
A linear extension $<_L$ of a partially ordered set $(\mathcal{P},\leq)$ is a total order of $\mathcal{P}$ such that if $x,y \in \mathcal{P}$ with $x<y$, then $x <_L y$.  Let $(\mathcal{P}_1,\leq_1)$ and $(\mathcal{P}_2, \leq_2)$ be posets. A function $f:\mathcal{P}_1 \rightarrow \mathcal{P}_2$ is an \emph{order embedding} if $x\leq_1 y$ precisely when $f(x) \leq_2 f(y)$.  Note that an order embedding is necessarily injective.  If $f$ is an inclusion map, we may write $ \mathcal{P}_1 \hookrightarrow \mathcal{P}_2$.  An order embedding is an \emph{isomorphism} if it is bijective.   Let $\{(\mathcal{P}_k,\leq_k):1\leq k \leq n\}$ be a collection of posets.

The product $\prod_{k=1}^n\mathcal{P}_k$ is naturally equipped with a partial order $\leq$ where for $x,y \in \prod_{k=1}^n\mathcal{P}_k$ we have $x\leq y$ if and only if $x_k \leq_k y_k$ for all $k$ with $1\leq k\leq n$.  If a poset has a minimum element, we may denote this element by $\hat{0}$, and if a poset has a maximum element, we may denote this element by  $\hat{1}$.  An element $a \in \mathcal{P}$ is an \emph{atom} if it covers $\hat{0}$.  A poset is \emph{irreducible} if it is not a product of two other posets, each
consisting of at least two elements. Let $I(\mathcal{P})=\{x \in \mathcal{P} \mid [\hat{0},x]~\text{is irreducible}\}$ be the \emph{set of irreducibles of $\mathcal{P}$}.  The set $\text{max}\,I(\mathcal{P})_{\leq x}$ is the set 
 of \emph{elementary divisors}  of $x$. 

\begin{lem}\label{posetfactorlem}\cite[Proposition 2.1]{feichtner2004incidence}
Let $(\mathcal{P},\leq)$ be a partially ordered set with a minimum element $\hat{0}$ and a maximum element $x$.  Let  $\text{max}\,I(\mathcal{P})=\{y_1,...,y_l\}$, then

$$\mathcal{P}\cong \prod_{i=1}^l[\hat{0},y_i].$$
  
\end{lem}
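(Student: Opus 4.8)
The plan is to induct on the number of elements $|\mathcal{P}|$, handling the irreducible case directly (this includes the one-element poset, which is irreducible under the stated definition since it cannot be written as a product of two posets with at least two elements each). If $\mathcal{P}$ is irreducible, then $[\hat{0},x]=\mathcal{P}$ is irreducible, so $x\in I(\mathcal{P})$; since $x$ is the maximum of $\mathcal{P}$ it is the maximum of $I(\mathcal{P})$, hence $\text{max}\,I(\mathcal{P})=\{x\}$ and the asserted isomorphism $\mathcal{P}\cong[\hat{0},x]$ is immediate.

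Now suppose $\mathcal{P}$ is reducible, say $\mathcal{P}\cong\mathcal{Q}_1\times\mathcal{Q}_2$ with $|\mathcal{Q}_1|,|\mathcal{Q}_2|\geq 2$. First I would record that each factor inherits a minimum $\hat{0}_k$ and a maximum (the coordinates of $\hat{0}$ and of $x$), and that $|\mathcal{Q}_k|<|\mathcal{P}|$, so the inductive hypothesis applies to $\mathcal{Q}_1$ and $\mathcal{Q}_2$. The crucial structural step is to compute $I(\mathcal{P})$ from $I(\mathcal{Q}_1)$ and $I(\mathcal{Q}_2)$. For $(a,b)\in\mathcal{Q}_1\times\mathcal{Q}_2$ the lower interval factors as $[\hat{0},(a,b)]=[\hat{0}_1,a]\times[\hat{0}_2,b]$. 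If $a>\hat{0}_1$ and $b>\hat{0}_2$, both factors have at least two elements, so this interval is reducible and $(a,b)\notin I(\mathcal{P})$; if instead one coordinate is minimal, say $b=\hat{0}_2$, then $[\hat{0},(a,\hat{0}_2)]\cong[\hat{0}_1,a]$, which is irreducible exactly when $a\in I(\mathcal{Q}_1)$. Hence
\[
I(\mathcal{P})=\bigl(I(\mathcal{Q}_1)\times\{\hat{0}_2\}\bigr)\cup\bigl(\{\hat{0}_1\}\times I(\mathcal{Q}_2)\bigr).
\]

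Passing to maximal elements — and using that $\hat{0}_k\notin\text{max}\,I(\mathcal{Q}_k)$ since $\mathcal{Q}_k$ is nontrivial — gives $\text{max}\,I(\mathcal{P})=\bigl(\text{max}\,I(\mathcal{Q}_1)\times\{\hat{0}_2\}\bigr)\cup\bigl(\{\hat{0}_1\}\times\text{max}\,I(\mathcal{Q}_2)\bigr)$, with $[\hat{0},(y,\hat{0}_2)]\cong[\hat{0}_1,y]$ for $y\in\text{max}\,I(\mathcal{Q}_1)$ and symmetrically. Therefore
\[
\prod_{y\in\text{max}\,I(\mathcal{P})}[\hat{0},y]\;\cong\;\Bigl(\prod_{z\in\text{max}\,I(\mathcal{Q}_1)}[\hat{0}_1,z]\Bigr)\times\Bigl(\prod_{w\in\text{max}\,I(\mathcal{Q}_2)}[\hat{0}_2,w]\Bigr)\;\cong\;\mathcal{Q}_1\times\mathcal{Q}_2\;\cong\;\mathcal{P},
\]
where the middle isomorphism is the inductive hypothesis applied to each factor.

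The main obstacle is the structural identification of $I(\mathcal{P})$ for a product, where one must handle the $\hat{0}$ edge cases carefully — in particular that the one-element interval counts as irreducible, that $\hat{0}_k$ is never a maximal irreducible of a nontrivial factor, and that no maximal irreducible of $\mathcal{P}$ is created or destroyed under the correspondence with the factors. Once the displayed description of $I(\mathcal{P})$ is established, the remainder is bookkeeping. Note that only existence of the isomorphism is needed; the uniqueness of the factorization (a Krull–Schmidt-type statement) is irrelevant here, and indeed the right-hand side is manifestly intrinsic to $\mathcal{P}$, so the particular choice of decomposition $\mathcal{Q}_1\times\mathcal{Q}_2$ does not matter.
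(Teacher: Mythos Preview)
Your proof is correct and close in spirit to the paper's, but the inductive step is organized differently. The paper first uses induction to factor $\mathcal{P}$ \emph{completely} as $\prod_{i=1}^k \mathcal{P}_i$ with each $\mathcal{P}_i$ irreducible, and then argues that the images of the maxima $z_i$ of the $\mathcal{P}_i$ under the chosen isomorphism are precisely the elements of $\text{max}\,I(\mathcal{P})$; this simultaneously yields the desired isomorphism and a Krull--Schmidt--type uniqueness of the irreducible factorization. You instead split $\mathcal{P}$ only into two factors $\mathcal{Q}_1\times\mathcal{Q}_2$, establish the clean formula $I(\mathcal{Q}_1\times\mathcal{Q}_2)=(I(\mathcal{Q}_1)\times\{\hat{0}_2\})\cup(\{\hat{0}_1\}\times I(\mathcal{Q}_2))$, and apply the full inductive hypothesis to each factor. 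Your route is a bit more direct for the existence statement alone; the paper's route buys the uniqueness of the finest factorization as a byproduct, which the paper indeed remarks on immediately after the proof.
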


\begin{proof}
    If $x$ is irreducible, then the statement is trivial.  Otherwise $x$ is reducible and, by induction, there exists some factorization $[\hat{0},x]=\mathcal{P} \cong \prod_{i=1}^k\mathcal{P}_i$ with $k\geq 2$, such that $\mathcal{P}_i$ is irreducible and $|\mathcal{P}_i|\geq 2$ for each $i$.   We claim that, after some potential reordering, $\mathcal{P}_i \cong [\hat{0},y_i]$ and $k=l$.  First we observe that each $\mathcal{P}_i$ must have a minimum element, which we denote $\hat{0}$, and a maximum element $z_i$, otherwise $\mathcal{P}$ would have more than one minimal or maximal element.  Let $\mathcal{P}_i = [\hat{0},z_i]$ and let $\phi:\prod_{i=1}^k\mathcal{P}_i \rightarrow \mathcal{P}$ be an explicit isomorphism.  
 For any fixed $y_j$, there exists $w_i \in \mathcal{P}_i$ for $1\leq i\leq k$ such that $\phi(\prod_{i=1}^kw_i)=y_j$.  It follows from the definition of a poset isomorphism that $[\hat{0}, y_j] \cong \prod_{i=1}^k[\hat{0},w_i].$ Therefore, there is a unique index $i$ for which $w_i \neq \hat{0}$.  By assumption, each $z_i$ is irreducible.  Letting $\phi(z_i)$ denote $\phi(\hat{0},\ldots, z_i, \ldots, \hat{0})$, we have $y_j \leq \phi(z_i)$.  By the definition of $y_j$, it must be that $y_j = \phi(z_i)$.  Thus the map $\phi$ induces a canonical bijection between $\{z_1, \ldots, z_k\}$ and $\{y_1, \ldots, y_l\}$, and the statement of the claim follows. 
\end{proof}

We note that the factorization in the statement of Lemma \ref{posetfactorlem} is the unique finest factorization of $\mathcal{P}$.

\begin{defn}
Let $\mathcal{P}$ be a poset and $x,y \in \mathcal{P}$.  The greatest lower bound of elements $x$ and $y$, if it exists, is the \emph{meet} of $x$ and $y$, denoted $x\wedge y$.
The least upper bound of $x$ and $y$, if it exists, is the \emph{join} of $x$ and $y$, denoted $x\vee y$.  A poset $\mathcal{L}$ is called a \emph{meet-semilattice} if the meet of any two of its elements exists in $\mathcal{L}$.  We define a \emph{join-semilattice} similarly.  If $\mathcal{L}$ is a meet-semilattice and $x,y \in \mathcal{L}$, we write $x\vee y \in \mathcal{L}$ to mean the statement ``The join of $x$ and $y$ exists in $\mathcal{L}$.''
A poset $\mathcal{L}$ is called a \emph{lattice} if it is a meet-semilattice and a join-semilattice.  For avoiding confusion we may denote $\wedge$ as $\wedge^{\mathcal{L}}$, and  $\vee$ as $\vee^{\mathcal{L}}$.
If $\mathcal{L}$ is a meet-semilattice, we define $\mathcal{L}^+:=\mathcal{L}\backslash \{\hat{0}\}$.  Let $\mathcal{L}$ and $\mathcal{K}$ be meet-semilattices.  A \emph{meet-semilattice order embedding} $f:\mathcal{L}\rightarrow \mathcal{K}$ is an order embedding of $\mathcal{L}$ and $\mathcal{K}$ as posets such that for every $x,y \in \mathcal{L}$, we have $f(x \wedge^{\mathcal{L}} y)=f(x)\wedge^{\mathcal{K}}f(y)$.  

\end{defn}

We emphasize that a meet-semilattice order embedding of lattices need not respect joins.

\begin{rmk}
 If $\mathcal{L}$ is a meet-semilattice and contains a maximum element $\hat{1}$ then $\mathcal{L}$ is a lattice: for any $x,y \in \mathcal{L}$, $$x\vee y := \bigwedge_{x,y\, \leq \, z}z.$$  Thus if $\mathcal{L}$ is a meet-semilattice, or a join-semilattice, and $x,y\in \mathcal{L}$ with $x\leq y$, the interval $[x,y]\subseteq \mathcal{L}$ is itself a lattice. Let $\mathcal{E}$ be a set and $\mathbb{S}$ a lattice of subsets of $\mathcal{E}$ with meet given by intersection, then we refer to $\mathbb{S}$ as an \emph{intersection lattice}.\footnote{We warn the reader that this terminology, although natural, could be a source of confusion as the lattice of flats of a hyperplane arrangement forms an intersection lattice, but this is the reverse of the intersection order  of flats in an arrangement.  In de Concini-Procesi \cite{de1995wonderful}, this confusion is avoided by working in the dual vector space.}
\end{rmk}

\begin{lem}\label{wedgeineqlem}
Let $\mathcal{L},  \mathcal{K}$ be meet-semilattices, $f:\mathcal{L}\rightarrow \mathcal{K}$ a meet-semilattice order embedding, and $x_1, \ldots, x_k \in \mathcal{L}$. 

$${\rm If } \bigvee_{1\leq i \leq k}^{\mathcal{L}}x_i \in \mathcal{L}\,  \,\,\,\,
\text{then}\,\,\,\,\bigvee_{1\leq i \leq k}^{\mathcal{K}}x_i \leq \bigvee_{1\leq i \leq k}^{\mathcal{L}}x_i\,.$$
\end{lem}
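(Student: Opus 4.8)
The plan is to combine monotonicity of $f$ with the universal property of the join. Write $y := \bigvee^{\mathcal{L}}_{1\le i\le k} x_i$, which lies in $\mathcal{L}$ by hypothesis. Since $x_i \le y$ in $\mathcal{L}$ for each $i$, and $f$ is an order embedding and hence order-preserving, we obtain $f(x_i) \le f(y)$ in $\mathcal{K}$; thus $f(y)$ is a common upper bound in $\mathcal{K}$ of the elements $f(x_1), \ldots, f(x_k)$.

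Next I would verify that $\bigvee^{\mathcal{K}}_{1\le i\le k} x_i$ is actually well-defined, since $\mathcal{K}$ is only assumed to be a meet-semilattice. Consider the set $U := \{z \in \mathcal{K} : f(x_i) \le z \text{ for all } i\}$ of common upper bounds. It is nonempty (it contains $f(y)$ by the previous paragraph) and finite, so — as $\mathcal{K}$ is closed under finite meets — the element $\bigwedge^{\mathcal{K}} U$ exists in $\mathcal{K}$. Each $f(x_i)$ is a lower bound of $U$, hence $f(x_i) \le \bigwedge^{\mathcal{K}} U$, so $\bigwedge^{\mathcal{K}} U$ is itself a common upper bound; and it lies below every element of $U$, hence below every common upper bound. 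Therefore $\bigwedge^{\mathcal{K}} U = \bigvee^{\mathcal{K}}_{1\le i\le k} f(x_i)$, so this join exists, and by its defining minimality property $\bigvee^{\mathcal{K}}_{1\le i\le k} f(x_i) \le f(y)$.

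Finally, under the identification of $\mathcal{L}$ with its image in $\mathcal{K}$ afforded by the order embedding $f$ (matching the convention $\mathcal{L}\hookrightarrow\mathcal{K}$ implicit in the statement), the inequality $\bigvee^{\mathcal{K}}_{1\le i\le k} f(x_i) \le f(y)$ reads $\bigvee^{\mathcal{K}}_{1\le i\le k} x_i \le \bigvee^{\mathcal{L}}_{1\le i\le k} x_i$, as claimed. I do not anticipate any genuine obstacle; the only point requiring care is that the join on $\mathcal{K}$, which need not be a lattice, must be shown to exist before the inequality can be asserted, and the hypothesis $\bigvee^{\mathcal{L}}_{1\le i\le k} x_i \in \mathcal{L}$ is precisely what supplies the common upper bound making that argument go through. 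Note that only the order-embedding property of $f$ is used here; compatibility with meets plays no role in this particular lemma.
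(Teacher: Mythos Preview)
Your proof is correct and follows essentially the same route as the paper: both arguments express the join as the meet of the set of common upper bounds and use that the set of upper bounds in $\mathcal{K}$ contains the set of upper bounds in $\mathcal{L}$ (equivalently, that $f(y)$ is one particular such upper bound), whence the meet over the larger set is smaller. Your version is slightly more careful in explicitly verifying that $\bigvee^{\mathcal{K}}_{i} x_i$ exists, a point the paper leaves implicit; your closing remark that only the order-embedding property (not meet-preservation) is needed is also a fair observation.
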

\begin{proof}
    Because $f$ is an order embedding, we can identify $\wedge^{\mathcal{K}}=\wedge^{\mathcal{L}}$ and denote this meet by $\wedge$.  Let $\mathcal{S}=\{ y \in \mathcal{K}: \forall\, i \in[k],\, x_i \leq y\}$ and $\mathcal{T}=\{ y \in \mathcal{L}: \forall\, i \in[k] , \, x_i \leq y\}$, then $\mathcal{S}\supseteq \mathcal{T}$.  Hence

    $$\bigvee_{1\leq i \leq k}^{\mathcal{K}}x_i=\bigwedge_{y \in \mathcal{S}}y \leq \bigwedge_{y \in \mathcal{T}}y=\bigvee_{1\leq i \leq k}^{\mathcal{L}}x_i.$$
\end{proof}

\begin{defn}
A lattice $\mathcal{L}$ is \emph{ranked} if there exists a function $r: \mathcal{L}\rightarrow \mathbb{Z}_{\geq 0}$ such that 

\begin{enumerate}
    \item $x \leq y$ in $\mathcal{L}$ implies $r(x) \leq r(y)$.  
    \item If $y$ covers $x$, then $r(y)=r(x)+1$.
    \end{enumerate}
     
A ranked lattice is said to be \emph{supermodular} if $r(x \wedge y) +r(x \vee y) \geq r(x)+r(y)$.

A ranked lattice is said to be \emph{submodular} if $r(x \wedge y) +r(x \vee y) \leq r(x)+r(y)$.
\end{defn}

\begin{defn}\label{main}\cite[Section 2.3]{de1995wonderful}\cite[Definition 2.2]{feichtner2004incidence} Let $\mathcal{L}$ be a meet-semilattice, and $B \subseteq \mathcal{L}^+$.  The set $B$ is a \emph{building set} of $\mathcal{L}$ if for any $x \in \mathcal{L}^+$ and $\text{max}\,B_{\leq x} =\{x_1,...,x_k\}$ there is an isomorphism of posets  
\begin{equation}\label{buildproduct}
    \phi_x: \prod_{j=1}^k[\hat{0},x_j] \rightarrow [\hat{0},x] 
\end{equation}
with $\phi_x(y_1,...,y_k)=y_1 \vee \cdots \vee y_k$ for $y_j \in [\hat{0},x_j]$ with $1\leq j \leq k.$  We call $\text{max}\,B_{\leq x}$ the \emph{set of factors} of $x$ in $B$.  We denote $\mathbb{B}(\mathcal{L})=\{B\subseteq \mathcal{L}^+:B~\text{is a building set of}~\mathcal{L}\}.$  
\end{defn}

\begin{ex}\label{min}
The set of irreducible elements $I(\mathcal{L})$ forms a building set by Lemma \ref{posetfactorlem}. We call $I(\mathcal{L})$ the \emph{minimum building set.}
\end{ex}

\begin{ex}\label{max} The set $\mathcal{L}^+$ is a building set: for any $x\in \mathcal{L}^+$, $ \text{max}\,\mathcal{L}^+_{\leq x}=\{x\}$ and we have the trivial isomorphism $\phi_x: [\hat{0},x] \rightarrow [\hat{0},x]$. We call $\mathcal{L}^+$ the \emph{maximum building set}.
\end{ex}

We note that our terminology in Examples \ref{min} and \ref{max} differs from that of others; the minimum and maximum building sets are typically referred to as the minimal and maximal building sets, respectively.  This revision of terminology is justified by Proposition \ref{lattice}.  

Associated to any building set, one has a distinguished simplicial complex $\mathcal{N}(B)$ called the \emph{nested set complex}.  Typically, when a building set $B$ is utilized for producing a compactification $W_B(X)$ of some space $X$, the complex $\mathcal{N}(B)$ encodes the poset of strata of $W_B(X)$.

\begin{defn}
    Let $\mathcal{L}$ be a meet-semilattice and $B \in \mathbb{B}(\mathcal{L})$.  A subset $N \subseteq B$ is a \emph{nested set} if for every $x_1,...,x_t \in N$ with $t \geq 2$, which are incomparable elements,  $\bigvee_{i=1}^t x_i \in \mathcal{L} \backslash B$.  The \emph{nested set complex} $\mathcal{N}(B)$ is the abstract simplicial complex  consisting of all nested sets of $B$.  For avoiding confusion, we may denote $\mathcal{N}(B)$ as $\mathcal{N}_{\mathcal{L}}(B)$. 
\end{defn}

The primary focus of this article is building sets, but nested set complexes will be discussed in Theorem \ref{1strest}, where we will need the following lemma.  

\begin{lem}\label{nestedlem}\cite[Section 2.4]{de1995wonderful}\cite[Proposition 2.8]{feichtner2004incidence} Let $\mathcal{L}$ be a meet-semilattice, $B \in \mathbb{B}(\mathcal{L})$, and $\mathcal{N}(B)$ the associated nested set complex.  
Let $N \in \mathcal{N}(B)$ with $x_1, \ldots, x_k \in N$ incomparable and let $x = \bigvee_{i=1}^k x_i$.  
Then $\text{max}\, B_{\leq x} = \{x_1,\ldots, x_k\}$.
    
\end{lem}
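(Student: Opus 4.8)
The plan is to show the two inclusions $\{x_1,\dots,x_k\} \subseteq \operatorname{max} B_{\leq x}$ and $\operatorname{max} B_{\leq x} \subseteq \{x_1,\dots,x_k\}$ separately, using the defining product decomposition of the building set together with the nested set condition. First observe that each $x_i \leq x$ and each $x_i \in B$, so $x_i \in B_{\leq x}$; moreover the $x_i$ are pairwise incomparable, so none of them lies below another element of $\{x_1,\dots,x_k\}$. Thus to conclude $x_i \in \operatorname{max} B_{\leq x}$ it suffices to rule out the existence of some $z \in B$ with $x_i < z \leq x$.

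For the harder containment, write $\operatorname{max} B_{\leq x} = \{w_1,\dots,w_m\}$, the set of factors of $x$ in $B$, and invoke Definition \ref{main}: the join map $\phi_x \colon \prod_{j=1}^m [\hat 0, w_j] \to [\hat 0, x]$ is a poset isomorphism sending $(y_1,\dots,y_m)$ to $y_1 \vee \cdots \vee y_m$. Since each $x_i \leq x = \phi_x(w_1,\dots,w_m)$, the element $x_i$ corresponds under $\phi_x^{-1}$ to a tuple $(y_1^{(i)},\dots,y_m^{(i)})$ with $y_j^{(i)} \leq w_j$. The key point is that $[\hat 0, x_i]$ is irreducible (because $x_i \in B$ is a factor, hence itself satisfies the decomposition with a single factor — more directly, $\operatorname{max} B_{\leq x_i} = \{x_i\}$ since $x_i$ is maximal in $B_{\leq x}$ and $x_i \leq x$, so its product decomposition is trivial), so the interval $[\hat 0, x_i] \cong \prod_j [\hat 0, y_j^{(i)}]$ forces all but one coordinate $y_j^{(i)}$ to equal $\hat 0$; say the nonzero coordinate is in position $\sigma(i)$, and then $x_i = y_{\sigma(i)}^{(i)} \leq w_{\sigma(i)}$. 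Because the $x_i$ are incomparable, the map $\sigma \colon \{1,\dots,k\} \to \{1,\dots,m\}$ is injective. It remains to see it is surjective: if some $w_j$ were missed, then $x = \bigvee_i x_i \leq \bigvee_{j \neq j_0} w_j < x$ (the last inequality because $\phi_x$ is an isomorphism and $w_{j_0} \neq \hat 0$), a contradiction; hence $k = m$ and, after reindexing, $x_i \leq w_i$. Finally, the nested set condition on $N$ applied to $\{x_1,\dots,x_k\}$ says $x = \bigvee_i x_i \in \mathcal{L}\setminus B$, while if any inclusion $x_i \leq w_i$ were strict we could not have each $w_i \in B$ equal to the corresponding $x_i$; more carefully, $w_i = \phi_x$ of the tuple with $w_i$ in slot $i$, and writing $w_i$ itself in terms of the $x$-decomposition and using irreducibility of $[\hat 0, w_i]$ shows $w_i$ lies above exactly one $x_{i'}$, giving $x_i \leq w_i \leq x$ with $w_i \in B$; combined with injectivity and a counting/containment argument this pins down $w_i = x_i$.

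Let me streamline the last step: once we know $\{x_1,\dots,x_k\}$ and $\{w_1,\dots,w_m\}$ are related by $x_i \leq w_{\sigma(i)}$ with $\sigma$ a bijection (so $k=m$), suppose for contradiction some $x_{i_0} < w_{i_0}$ (after reindexing $\sigma = \mathrm{id}$). Since $w_{i_0} \in B$ and $w_{i_0} \leq x$, and since $x = \bigvee_i x_i$ while the $x_i$ are the incomparable elements of $N$, one shows $w_{i_0} = \bigvee_{i \in S} x_i$ for some $S \ni i_0$ by applying the isomorphism $\phi_x$ to $w_{i_0}$ and reading off which slots are nonzero; but then $w_{i_0} = \bigvee_{i\in S} x_i$ with the $x_i$ incomparable elements of $N$, and if $|S| \geq 2$ the nested set axiom gives $w_{i_0} \in \mathcal{L}\setminus B$, contradicting $w_{i_0}\in B$; hence $|S| = 1$, i.e. $w_{i_0} = x_{i_0}$. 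This contradicts $x_{i_0} < w_{i_0}$, so in fact $x_i = w_i$ for all $i$, proving $\operatorname{max} B_{\leq x} = \{x_1,\dots,x_k\}$.

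The main obstacle I anticipate is the bookkeeping around the isomorphism $\phi_x$: one must argue cleanly that an arbitrary element $b \in B$ with $b \leq x$ corresponds under $\phi_x^{-1}$ to a tuple supported on a \emph{single} coordinate (using irreducibility of $[\hat 0, b]$, which holds because $\operatorname{max} B_{\leq b} = \{b\}$), and hence $b \leq w_j$ for exactly one factor $w_j$; and more delicately, that when we decompose $w_j$ via the \emph{$x$-coordinates} coming from $N$ — rather than the factors of $x$ in $B$ — we again get a single-coordinate support, so that $w_j$ is the join of a \emph{subset} of $\{x_1,\dots,x_k\}$. Getting the interplay between "factors of $x$ in $B$" and "the chosen incomparable elements $x_1,\dots,x_k$ of $N$" exactly right, and invoking the nested set condition at the precise moment, is where care is needed; everything else is a routine application of Definition \ref{main} and Lemma \ref{posetfactorlem}.
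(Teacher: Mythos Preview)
Your overall strategy is the paper's: locate each $x_i$ below a unique factor $w_{\sigma(i)}\in\operatorname{max}B_{\leq x}$, then use the isomorphism $\phi_x$ and the nested set condition to force $\{x_i\}=\{w_j\}$. But two steps fail as written. First, the claim that $[\hat 0,x_i]$ is irreducible is false in general (take $B=\mathcal{L}^+$ on a Boolean lattice: every element of $B$ of rank $\geq 2$ is reducible). You do not need it: since $x_i\in B_{\leq x}$ and the $w_j$ are the \emph{maximal} elements of $B_{\leq x}$, some $w_j$ satisfies $x_i\leq w_j$, and uniqueness follows because distinct intervals $[\hat 0,w_j]$ meet only at $\hat 0$.

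Second, and this is the real gap, incomparability of the $x_i$ does \emph{not} make $\sigma$ injective: two incomparable elements can sit below the same $w_j$. This is exactly where the nested set condition must enter, not later. The clean fix (which is what the paper does) handles injectivity and surjectivity at once: set $z_j:=\bigvee_{\sigma(i)=j}x_i\leq w_j$. Since $\phi_x$ is a bijection and $\bigvee_j z_j=\bigvee_i x_i=x=\bigvee_j w_j$, we get $z_j=w_j$ for every $j$. If $\lvert\sigma^{-1}(j)\rvert\geq 2$ then $z_j$ is a join of $\geq 2$ incomparable elements of $N$, so $z_j\notin B$, contradicting $z_j=w_j\in B$; if $\lvert\sigma^{-1}(j)\rvert=0$ then $z_j=\hat 0\neq w_j$. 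Hence $\sigma$ is a bijection and $x_{\sigma^{-1}(j)}=w_j$. With this correction your entire ``streamlined'' paragraph becomes unnecessary; in particular the assertion there that $w_{i_0}=\bigvee_{i\in S}x_i$ for some $S$ is also unjustified (under $\phi_x^{-1}$, the element $w_{i_0}$ is the tuple supported only in slot $i_0$ with value $w_{i_0}$, which says nothing about the $x_i$).
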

\begin{proof}
Let $\text{max}\, B_{\leq x} = \{y_1, \ldots, y_l\}$.  For $i$ with $1\leq i\leq l$, let $M_i = N \cap \mathcal{L}_{\leq y_i}$ and $z_i = \bigvee_{x_j \in M_i}x_j$.  Note that $x = \bigvee_{i=1}^l z_i$.  If $\{x_1,\ldots,x_k\} \neq \{y_1, \ldots, y_l\}$ then there exists some $i$ for which $M_i \neq \{y_i\}$. 
 By the definition of a nested set, it follows that $z_i <y_i$, but then because the map $\phi_x$ in Definition \ref{main} is a bijection, we have that $\bigvee_{i=1}^l z_i<\bigvee_{i=1}^l y_i=x$, a contradiction.  Hence $\text{max}\,B_{\leq x}=\{x_1,\ldots,x_k\}$.
\end{proof}
De Concini and Procesi introduced three equivalent characterizations of a building set \cite[Section 2.3]{de1995wonderful}.  Their third and most concise characterization has been featured prominently for the Boolean lattice in the work of Postnikov \cite[Definition 7.1]{ postnikov2009permutohedra} and Feichtner-Sturmfels \cite[Lemma 3.9]{feichtner2005matroid}\footnote{We note that building sets for the Boolean lattice have also appeared in the work of Schmitt under the name Whitney systems \cite[Definition 1]{schmitt1995hopf}.}\,\footnote{Building sets for the Boolean lattice correspond to certain smooth polytopes now known as \emph{nestohedra}.}, but this characterization has been largely overlooked by researchers in matroid theory.  Recently, Bibby-Denham-Feichtner proved that one half of this characterization holds for locally geometric semilattices \cite[Proposition 2.5.3 (b)]{bibby2022leray}, a result which was utilized by Pagaria-Pezzoli \cite{pagaria2023hodge}. Here we extend de Concini-Procesi's characterization to general meet-semilattices and note that this appears to be the first time this characterization has been described in full for geometric lattices.  To recover the Boolean case from Proposition \ref{usefulchar}, note that here $I(\mathcal{L})=\{\{i\}:i\in [n]\}$, meets and joins are intersections and unions, respectively, and joins always exist.

\begin{prop}\label{usefulchar}
Let $\mathcal{L}$ be a meet-semilattice and $B$ a subset of $\mathcal{L}^+$.  Then $B$ is a building set if and only if
\begin{enumerate}
    \item\label{bscon1} $I(\mathcal{L})\subseteq B$
    \item\label{bscon2} if $x,y \in B$ such that $x \wedge y \neq \hat{0}$ and $x \vee y \in \mathcal{L}$, then $x \vee y \in B$.
\end{enumerate} 
\end{prop}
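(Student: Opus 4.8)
The plan is to prove both directions by unwinding Definition \ref{main} carefully.

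\smallskip

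\textbf{($\Rightarrow$) Building set implies (\ref{bscon1}) and (\ref{bscon2}).}
Suppose $B$ is a building set. For (\ref{bscon1}), let $x \in I(\mathcal{L})$, so $[\hat 0, x]$ is irreducible. Apply the definition of a building set at $x$: writing $\text{max}\,B_{\leq x} = \{x_1, \ldots, x_k\}$, we get an isomorphism $\phi_x : \prod_{j=1}^k [\hat 0, x_j] \to [\hat 0, x]$. Since $[\hat 0, x]$ is irreducible and each $[\hat 0, x_j]$ has at least two elements (as $x_j \in \mathcal{L}^+$), the product cannot have more than one nontrivial factor, so $k = 1$ and $[\hat 0, x_1] = [\hat 0, x]$, i.e. $x_1 = x$. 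Thus $x \in \text{max}\,B_{\leq x} \subseteq B$. For (\ref{bscon2}), suppose $x, y \in B$ with $x \wedge y \neq \hat 0$ and $w := x \vee y \in \mathcal{L}$; we must show $w \in B$. Consider $\text{max}\,B_{\leq w} = \{w_1, \ldots, w_k\}$ and the isomorphism $\phi_w$. Since $x, y \leq w$ and $x, y \in B$, each of $x$ and $y$ lies below some factor: say $x \leq w_a$ and $y \leq w_b$. If $a \neq b$, then in the product decomposition $\phi_w^{-1}(x)$ is supported in coordinate $a$ and $\phi_w^{-1}(y)$ in coordinate $b$, so $\phi_w^{-1}(x \wedge y) = \phi_w^{-1}(x) \wedge \phi_w^{-1}(y) = \hat 0$, forcing $x \wedge y = \hat 0$, a contradiction. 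Hence $a = b$, so $x, y \leq w_a$, giving $w = x \vee y \leq w_a \leq w$, so $w = w_a \in B$.

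\smallskip

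\textbf{($\Leftarrow$) Conditions (\ref{bscon1}) and (\ref{bscon2}) imply building set.}
Assume $B$ satisfies (\ref{bscon1}) and (\ref{bscon2}); fix $x \in \mathcal{L}^+$ with $\text{max}\,B_{\leq x} = \{x_1, \ldots, x_k\}$. We must produce the isomorphism $\phi_x$ of (\ref{buildproduct}). The natural candidate is $\phi_x(y_1, \ldots, y_k) = y_1 \vee \cdots \vee y_k$; the work is to show this is well-defined (the join exists and lands in $[\hat 0, x]$), bijective, and order-preserving with order-preserving inverse. The inverse should be $z \mapsto (z \wedge x_1, \ldots, z \wedge x_k)$. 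The key structural fact to establish first is that the $x_j$ are pairwise "independent" in the sense that $x_i \wedge x_j = \hat 0$ for $i \neq j$: indeed, if $x_i \wedge x_j \neq \hat 0$, note that $x_i \vee x_j \leq x$ exists (since the interval $[\hat 0, x]$ is a lattice, by the Remark), so by (\ref{bscon2}) $x_i \vee x_j \in B$ and lies in $B_{\leq x}$, contradicting maximality of $x_i, x_j$ unless $x_i = x_j$. I would then argue that $[\hat 0, x]$ is the internal direct product of the $[\hat 0, x_j]$: using Lemma \ref{posetfactorlem}, factor $[\hat 0, x] \cong \prod_{i=1}^l [\hat 0, y_i]$ over its elementary divisors $y_i \in \text{max}\,I(\mathcal{L})_{\leq x}$; each $y_i$ is irreducible hence in $B$ by (\ref{bscon1}), so $y_i \leq x_{\sigma(i)}$ for some factor. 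Conversely I would show each $x_j$ is itself a join of a subset of the $y_i$'s and that distinct $x_j$ pick up disjoint blocks of the $y_i$'s (using the $x_i \wedge x_j = \hat 0$ relation together with the product structure of $[\hat 0, x]$), so that $\{x_1, \ldots, x_k\}$ is exactly a coarsening of $\{y_1, \ldots, y_l\}$ into blocks — but since each $x_j$ is a factor of a building set, applying the forward direction's argument (or a direct check) shows each block is a singleton, giving $k = l$ and, after reindexing, $x_j = y_j$. Then $\phi_x$ coincides with the isomorphism from Lemma \ref{posetfactorlem}.

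\smallskip

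\textbf{Main obstacle.}
The delicate point is the reverse direction: showing that the factors $\text{max}\,B_{\leq x}$ coincide with the elementary divisors $\text{max}\,I(\mathcal{L})_{\leq x}$, i.e. that each maximal element of $B$ below $x$ is actually irreducible (equivalently, a single elementary divisor rather than a join of several). A priori a factor $x_j$ could be a reducible element that happens to be maximal in $B_{\leq x}$; ruling this out is where condition (\ref{bscon2}) does its real work, via the observation that if $x_j$ were reducible then $[\hat 0, x_j]$ splits as a product, its two top factors would be joins of irreducibles hence elements of $B$ (by (\ref{bscon1})) with nonzero meet (they share $\hat 0$... no — one must be careful: irreducibles in different factors of $[\hat 0,x_j]$ have meet $\hat 0$), so the argument instead must go: the elementary divisors of $x_j$ all lie in $B_{\leq x}$, are $\leq x_j$, and by (\ref{bscon2}) applied repeatedly any two with nonzero meet join back into $B$ — but within the irreducible-free... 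Rather, the clean route is to bypass this subtlety entirely by proving directly that for any $z \in [\hat 0, x]$ one has $z = \bigvee_j (z \wedge x_j)$ with the join existing in $[\hat 0,x]$, and that the $x_j$ are pairwise-meet-$\hat 0$ with pairwise-existing joins, from which the internal-direct-product structure — and hence the isomorphism $\phi_x$ — follows by a standard lattice-theoretic argument; I expect that establishing the spanning identity $z = \bigvee_j (z \wedge x_j)$ is the crux, and it should follow by downward induction on the rank (or height) of $z$ in $[\hat 0, x]$ together with condition (\ref{bscon2}) to absorb joins that reach above any single factor.
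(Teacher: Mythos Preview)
Your forward direction ($\Rightarrow$) is correct and essentially identical to the paper's.

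Your reverse direction contains a genuine error. You aim to show that $\text{max}\,B_{\leq x}$ coincides with $\text{max}\,I(\mathcal{L})_{\leq x}$, i.e.\ that each factor $x_j$ is irreducible, so that $k=l$ and $x_j=y_j$ after reindexing. This is false. Take $\mathcal{L}=\mathcal{B}_3$ and $B=\{\{1\},\{2\},\{3\},\{1,2\}\}$: conditions (\ref{bscon1}) and (\ref{bscon2}) hold, yet at $x=\{1,2,3\}$ one has $\text{max}\,B_{\leq x}=\{\{1,2\},\{3\}\}$, so $k=2$ while $l=3$, and the factor $\{1,2\}$ is reducible. So the target you identify as the ``main obstacle'' is not merely delicate---it is the wrong target. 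Your fallback route (prove $z=\bigvee_j(z\wedge x_j)$ directly and invoke a ``standard lattice-theoretic argument'') is also shaky: in a general meet-semilattice there is no distributivity available, so pairwise-zero meets plus a spanning identity do not by themselves force the join map $\prod_j[\hat 0,x_j]\to[\hat 0,x]$ to be injective; and ``downward induction on rank'' presumes a grading that $\mathcal{L}$ need not have.

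The fix, which is exactly what the paper does, is to stop one step earlier than you do. You correctly reach the point where the $\pi_t=\{y_i:y_i\leq x_t\}$ partition $\{y_1,\dots,y_l\}$. Rather than trying to make each block a singleton, observe that $\text{max}\,I(\mathcal{L})_{\leq x_t}=\pi_t$ (any elementary divisor of $x_t$ lies below some $y_i$, and that $y_i$ lies below the unique $x_t$). Now apply Lemma~\ref{posetfactorlem} twice: once to $[\hat 0,x]$ to get $\phi_x:\prod_i[\hat 0,y_i]\xrightarrow{\cong}[\hat 0,x]$, and once to each $[\hat 0,x_t]$ to get $\phi_{x_t}:\prod_{y\in\pi_t}[\hat 0,y]\xrightarrow{\cong}[\hat 0,x_t]$. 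Composing, $\phi_x\circ\prod_t\phi_{x_t}^{-1}:\prod_t[\hat 0,x_t]\to[\hat 0,x]$ is the required isomorphism. The point is that Lemma~\ref{posetfactorlem} already supplies the internal-direct-product structure you were trying to prove by hand; you just have to invoke it at the level of each $x_t$ as well as at the level of $x$.
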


\begin{proof}
($\Rightarrow$)  Let $B$ be a building set of  $\mathcal{L}$.  It is clear from the definition of a building set that $I(\mathcal{L})\subseteq B$. 
Let $x,y \in B$  such that $x \vee y \notin B$.  If $x \vee y \notin \mathcal{L}$, there is nothing to prove, so suppose that $x \vee y \in \mathcal{L}$,
and let $\text{max}\,B_{\leq x \vee y}=\{z_1,...,z_k\}.$  Both $x$ and $y$ are strictly less than $x\vee y$ as $x\vee y \not\in B$.  Therefore $x\in [\hat{0},z_s]$ and $y \in [\hat{0},z_t]$ where $s \neq t$.  Since $B$ is a building set there is an isomorphism $\phi_{x \vee y}: \prod_{j=1}^k[\hat{0},z_j] \rightarrow [\hat{0},x\vee y]$.   Because $\phi_{x \vee y}$ is injective, the intersection of these intervals is $\hat{0}$. It follows that $x \wedge y=\hat{0}$.  
\\
$(\Leftarrow)$ Let $B\subseteq \mathcal{L}^+$ which satisfies conditions (\ref{bscon1}) and (\ref{bscon2}).  Let $x \in \mathcal{L}^+$ and let $\text{max}\,B_{\leq x}=\{x_1,...,x_k\}$.  We wish to show  $\prod_{j=1}^k[\hat{0},x_j] \cong [\hat{0},x]$.   We begin with the observation that for each $1\leq i,j \leq k$ with $i\neq j$, $[\hat{0},x_i]\cap [\hat{0},x_j] = \hat{0}$.  Suppose this is not the case and let $z \in [\hat{0},x_i]\cap [\hat{0},x_j]$ with $\hat{0}\neq z$.   Then $x_i \wedge x_j \neq \hat{0}$ and $x_i,x_j < x$, so $x_i \vee x_j \in \mathcal{L}$, thus $x_i \vee x_j \in B$. But $x_i,x_j <x_i \vee x_j$ and $x_i \vee x_j \leq x$ which contradicts the maximality of $x_i$ and $x_j$.

Let $\text{max}\,I(\mathcal{L})_{\leq x}=\{y_1,\ldots,y_l\}$.  For each $1\leq t\leq k$ let $\pi_t=\{y_i : y_i \leq x_t\}$.  We claim that the set of $\pi_t$ forms a partition of $\text{max}\,I(\mathcal{L})_{\leq x}$.  By assumption, $I(\mathcal{L})$ is a subset of $B$, so for each $y_i$ there exists some $x_t$ such that $y_i \leq x_t$, and we know by disjointness of the $[\hat{0},x_i]$, that $x_t$ is unique in this regard -- this establishes the partition.

We next claim that $\text{max}\,I(\mathcal{L})_{\leq x_t}=\pi_t$.  Let $y \in   \text{max}\,I(\mathcal{L})_{\leq x_t}$, and let $y_i$ be such that $y \leq y_i$.  By our previous argument, there exists a unique $t$ such that $y_i \leq x_t$, hence $y = y_i$.
By Lemma \ref{posetfactorlem} there are isomorphisms 
\[
\phi_x: \prod_{j=1}^l [\hat{0},y_j] \xrightarrow{\cong} [\hat{0},x] \hspace{.5cm} \text{and}\hspace{.5cm} \phi_{x_t}: \prod_{y \in \pi_t} [\hat{0},y] \xrightarrow{\cong} [\hat{0},x_t].
\] 
It follows that $\phi_x \circ \prod_{t=1}^k (\phi_{x_t})^{-1}:\prod_{t=1}^k[\hat{0},x_t]\rightarrow [\hat{0},x]$ is an isomorphism.

\end{proof}

We will now investigate the poset $(\mathbb{B}(\mathcal{L}),\subseteq)$. When $\mathcal{L}$ is clear from context we may simply refer to $(\mathbb{B}(\mathcal{L}),\subseteq)$ as $\mathbb{B}$.  The following observation extends a result of Feichtner-Sturmfels \cite[Lemma 3.10]{feichtner2005matroid} from the Boolean lattice to an arbitrary meet-semilattice.

\begin{prop}\label{lattice}
Let $\mathcal{L}$ be a meet-semilattice.   The poset $(\mathbb{B}(\mathcal{L}),\subseteq)$ is an intersection  lattice. 
\end{prop}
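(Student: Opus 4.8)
The plan is to show that $\mathbb{B}(\mathcal{L})$ is closed under arbitrary intersections and possesses a maximum element; from this the lattice structure follows formally, with the meet operation given by set intersection, which is exactly what it means to be an intersection lattice.

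The first step is to invoke Proposition \ref{usefulchar} to replace the defining condition of a building set by the two conditions (\ref{bscon1}) and (\ref{bscon2}), which are manifestly preserved under intersection. Indeed, if $\{B_\alpha\}$ is any nonempty family of building sets, then $I(\mathcal{L}) \subseteq \bigcap_\alpha B_\alpha$ because $I(\mathcal{L}) \subseteq B_\alpha$ for every $\alpha$, so (\ref{bscon1}) holds; and if $x,y \in \bigcap_\alpha B_\alpha$ with $x \wedge y \neq \hat{0}$ and $x \vee y \in \mathcal{L}$, then $x \vee y \in B_\alpha$ for every $\alpha$ by (\ref{bscon2}), whence $x \vee y \in \bigcap_\alpha B_\alpha$. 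Thus $\bigcap_\alpha B_\alpha \in \mathbb{B}(\mathcal{L})$, so $(\mathbb{B}(\mathcal{L}),\subseteq)$ is a meet-semilattice with meet equal to intersection.

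The second step is to observe, via Example \ref{max}, that $\mathcal{L}^+$ is a building set containing every other building set, so $\mathbb{B}(\mathcal{L})$ has a maximum element. Consequently, for any $B_1,B_2 \in \mathbb{B}(\mathcal{L})$ the collection of building sets containing both is nonempty (it contains $\mathcal{L}^+$), and by the previous step its intersection lies in $\mathbb{B}(\mathcal{L})$ and is the least upper bound of $B_1$ and $B_2$; meanwhile $B_1 \cap B_2 \in \mathbb{B}(\mathcal{L})$ is their greatest lower bound. Hence $(\mathbb{B}(\mathcal{L}),\subseteq)$ is a lattice, and since its meet is set intersection it is an intersection lattice.

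I do not anticipate a serious obstacle; the content is the standard argument that a closure system forms a lattice. The only points requiring care are that one genuinely uses \emph{both} implications of Proposition \ref{usefulchar} — so that membership in a building set is an intrinsic, intersection-stable property rather than something that could be destroyed by passing to a sublattice of factors — and that one records the existence of the maximum element $\mathcal{L}^+$, which is what guarantees that joins in $\mathbb{B}(\mathcal{L})$ exist.
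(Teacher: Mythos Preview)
Your proof is correct and follows essentially the same approach as the paper's: use Proposition~\ref{usefulchar} to verify that the two conditions (\ref{bscon1}) and (\ref{bscon2}) are preserved under intersection, then invoke the maximum element $\mathcal{L}^+$ to conclude that the meet-semilattice is a lattice. The only cosmetic difference is that you treat arbitrary intersections while the paper handles the pairwise case, but this changes nothing substantive.
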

\begin{proof}
 We first utilize Proposition \ref{usefulchar} for showing that the intersection of two building sets is a building set.  Let $B$ and $B'$ be building sets. As $I(\mathcal{L}) \subseteq B,B'$, we have that $I(\mathcal{L}) \subseteq B \cap B'$.  Let $x$ and $y$ be elements of $B \cap B'$ such that $x \wedge y \neq \hat{0}$.  If $x \vee y \in 
 \mathcal{L}$, then $x \vee y \in B$ and $x \vee y \in B'$ as $B$ and $B'$ are building sets.  Therefore $x \vee y \in B \cap B'$.  The poset  $\mathbb{B}$ is a meet-semilattice with a maximum element $\mathcal{L}^+$, hence it is a lattice.
\end{proof}

\section{Convex Geometries}

In this section we review the theory of convex geometries and prove Proposition \ref{super}, which states that the collection of building sets on a meet-semilattice forms a supersolvable convex geometry.  We provide a new characterization of supersolvable convex geometries and conclude with a brief discussion of how one can algorithmically produce building sets.

\begin{defn}
A \emph{set system} is a pair $(\mathcal{E},\mathbb{S})$ where $\mathcal{E}$ is a set and $\mathbb{S} \subseteq 2^{\mathcal{E}}$.  We refer to $\mathcal{E}$ as the \emph{ground set}.
\end{defn}
\begin{defn}\label{closure}
A \emph{closure operator} on a ground set $\mathcal{E}$ is a map $\sigma: 2^{\mathcal{E}} \rightarrow 2^{\mathcal{E}}$ such that for each $A , B \subseteq \mathcal{E}$,  
\begin{enumerate}
    \item\label{closure1}  $A \subseteq \sigma(A)$ 

    \item\label{closure2} $A \subseteq B \Rightarrow \sigma(A) \subseteq \sigma(B)$ 

    \item\label{closure3} $\sigma(\sigma(A))=\sigma(A)$ 
\end{enumerate}

 We call the set $A$ \emph{closed} if $\sigma(A)=A$.
\end{defn}

\begin{defn}\label{closuresystem}
Let $\sigma$ be a closure operator on ground set $\mathcal{E}$.  Let $\mathbb{S}=\{A \in 2^{\mathcal{E}}:\sigma(A)=A\}$ be the closed subsets of $\mathcal{E}$.  Then $(\mathcal{E}, \mathbb{S}, \sigma)$ is called a \emph{closure system}. 
\end{defn}

\begin{lem}\label{capclosed}
Let $(\mathcal{E}, \mathbb{S}, \sigma)$ be a closure system and $A,B \subseteq \mathcal{E}$, then $\sigma(A) \cap \sigma(B) \in \mathbb{S}$.
\end{lem}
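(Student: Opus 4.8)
The plan is to show that $\sigma(A) \cap \sigma(B)$ is a closed set by establishing $\sigma(\sigma(A) \cap \sigma(B)) = \sigma(A) \cap \sigma(B)$, which immediately places it in $\mathbb{S}$. By axiom (\ref{closure1}) of Definition \ref{closure}, the inclusion $\sigma(A) \cap \sigma(B) \subseteq \sigma(\sigma(A) \cap \sigma(B))$ holds automatically, so the only content is the reverse inclusion.

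For the reverse inclusion, I would argue as follows. Since $\sigma(A) \cap \sigma(B) \subseteq \sigma(A)$, monotonicity (axiom (\ref{closure2})) gives $\sigma(\sigma(A) \cap \sigma(B)) \subseteq \sigma(\sigma(A))$, and idempotence (axiom (\ref{closure3})) gives $\sigma(\sigma(A)) = \sigma(A)$; hence $\sigma(\sigma(A) \cap \sigma(B)) \subseteq \sigma(A)$. By the symmetric argument using $\sigma(A) \cap \sigma(B) \subseteq \sigma(B)$, we also get $\sigma(\sigma(A) \cap \sigma(B)) \subseteq \sigma(B)$. Intersecting these two containments yields $\sigma(\sigma(A) \cap \sigma(B)) \subseteq \sigma(A) \cap \sigma(B)$, completing the proof.

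This is a routine verification from the closure axioms with no real obstacle; the only thing to be mildly careful about is not invoking any property of $\mathbb{S}$ beyond its definition as the family of closed sets, and correctly citing the three axioms of Definition \ref{closure} in the right places. The statement is essentially the standard fact that closed sets of a closure operator are closed under arbitrary intersection, specialized here to two sets of the form $\sigma(A)$ and $\sigma(B)$ (though of course the argument works verbatim for any two closed sets, and indeed for arbitrary intersections).
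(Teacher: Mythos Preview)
Your proof is correct and follows essentially the same approach as the paper: both show $\sigma(\sigma(A)\cap\sigma(B))=\sigma(A)\cap\sigma(B)$ by using monotonicity and idempotence to get $\sigma(\sigma(A)\cap\sigma(B))\subseteq\sigma(A)\cap\sigma(B)$, and the extensivity axiom for the reverse inclusion.
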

\begin{proof}
It needs to be shown that $\sigma(\sigma(A) \cap \sigma(B))=\sigma(A) \cap \sigma(B)$.  By (\ref{closure2}) of Definition \ref{closure}, $\sigma(\sigma(A) \cap \sigma(B)) \subseteq \sigma(\sigma(A))=\sigma(A)$ by (\ref{closure3}) and $\sigma(\sigma(A) \cap \sigma(B)) \subseteq \sigma(\sigma(B))=\sigma(B)$ by (\ref{closure3}),  which implies $\sigma(\sigma(A) \cap \sigma(B))\subseteq \sigma(A) \cap \sigma(B)$.  The reverse inclusion is a consequence of (\ref{closure1}) of Definition \ref{closure}.
\end{proof}

\begin{rmk}  
Every closure system $(\mathcal{E},\mathbb{S},\sigma)$ comes with a natural meet and join: for $A,B \in \mathbb{S}$,  $A \wedge B=A \cap B$ and $A \vee B=\sigma(A \cup B)$.  Thus $\mathbb{S}$ is an intersection lattice.  Conversely, given an intersection lattice $\mathbb{S}$ with ground set $\mathcal{E}$, we can define a closure operator $\sigma: 2^{\mathcal{E}} \rightarrow 2^{\mathcal{E}}$ as follows: given $X \subseteq \mathcal{E}$, let $$\sigma(X) = \bigcap_{A \in \mathbb{S},X \subseteq A}A.$$  This induces a canonical identification of intersection lattices and closure operators.  It follows, in particular, that a closure system $(\mathcal{E},\mathbb{S},\sigma)$ is encoded by $(\mathcal{E},\mathbb{S})$ alone, which we may denote by $\mathbb{S}$ when $\mathcal{E}$ is clear from the context.
\end{rmk}

It was observed by Feichtner-Sturmfels \cite[Lemma 3.10]{feichtner2005matroid} that in the case of the Boolean lattice, Proposition \ref{lattice} implies that the collection of building sets determines a closure operator.  Thus far, our article has been concerned with extending results of de Concini-Procesi \cite{de1995wonderful}, Postnikov \cite{postnikov2009permutohedra}, and Feichtner-Sturmfels \cite{feichtner2005matroid} to Feichtner-Kozlov's setting of general meet-semilattices \cite{feichtner2004incidence}.  We now go beyond the works of those authors, even in the case of the Boolean lattice, in understanding the structure of building sets.

A convex geometry is a special type of closure operator introduced independently by Edelman and Jamison.

\begin{defn}\label{antiex}\cite{edelman1980meet,jamison1982perspective}
Let $(\mathcal{E}, \mathbb{S}, \sigma)$ be a closure system and assume in addition that $\sigma$ satisfies
\[ \bullet\,\,
\text{for every}~ A \in \mathbb{S}~\text{and any distinct}~x,y\not\in A,\text{if}\,~y \in \sigma(A \cup \{x\}) \,\text{then}\, x \not\in \sigma(A \cup \{y\}).
\]
Then $\sigma$ is an \emph{anti-exchange closure operator} and $(\mathcal{E}, \mathbb{S}, \sigma)$ is a \emph{convex geometry}.  We may denote the convex geometry by $(\mathcal{E}, \mathbb{S})$, or simply $\mathbb{S}$, when no confusion will arise.
\end{defn}

 Greedoids are a generalization of matroids which were originally introduced by Korte and Lov\'asz  in the setting of greedy algorithms \cite{korte1981mathematical}.  Antimatroids form another important class of greedoids.  They first appeared in the work of Dilworth \cite{dilworth1940lattices} where they were described in the language of lattices.  Greedoids, antimatroids, and convex geometries, much like matroids, admit several cryptomorphic definitions.  We refer the reader to the excellent survey by Bj\"orner and Ziegler for an introduction to these topics \cite{bjorner1992introduction}.  The following definition is one of the most well-known definitions of an antimatroid. 

\begin{defn}
An \emph{antimatroid} is a set system $(\mathcal{E}, \mathbb{S})$ such that 
\begin{enumerate}
    \item $\emptyset \in \mathbb{S}$ 
    \item $\text{for every}~ A,B \in \mathbb{S}~\text{such that}~B \nsubseteq A,~\text{there exists}~e \in B\backslash A~\text{such that}~A \cup \{e\} \in \mathbb{S}$.
\end{enumerate}
\end{defn}

   For any set system $(\mathcal{E},\mathbb{S})$, there is a complementary set system  $(\mathcal{E},\mathbb{S}^c)$ where $\mathbb{S}^c=\{\mathcal{E}\backslash F:F \in \mathbb{S}\}$.  A fundamental result of Bj\"orner states that a set system $(\mathcal{E},\mathbb{S})$ is a convex geometry  if and only if the complementary set system $(\mathcal{E},\mathbb{S}^c)$ is an antimatroid \cite{bjorner1983matroids}.  Thus a convex geometry can alternately be defined as follows.

\begin{defn}\label{super2}
A \emph{convex geometry} is a set system $(\mathcal{E}, \mathbb{S})$ such that 
\begin{enumerate}
    \item\label{super2i}   $\mathcal{E} \in \mathbb{S}$ 

    \item\label{super2ii}  $\text{for every}~ A,B \in \mathbb{S}~\text{such that}~B \nsupseteq A,~\text{there exists}~e \in A\backslash B~\text{such that}~A \backslash \{e\} \in \mathbb{S}$ 
\end{enumerate}
\end{defn}

We note the following important fact.

\begin{prop}\label{convexgeometriesarelattices}
If $(\mathcal{E}, \mathbb{S})$ is a convex geometry then $\mathbb{S}$ is an intersection lattice.
\end{prop}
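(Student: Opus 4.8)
The plan is to show that $\mathbb{S}$ is closed under intersection, since an intersection-closed family containing $\mathcal{E}$ is automatically a lattice (meet is intersection, and the join of $A$ and $B$ is the intersection of all members of $\mathbb{S}$ containing $A \cup B$, which is nonempty because $\mathcal{E} \in \mathbb{S}$ by condition (\ref{super2i}) of Definition \ref{super2}). So the real content is: if $A, B \in \mathbb{S}$ then $A \cap B \in \mathbb{S}$.

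First I would argue this by a downward induction / extremal argument using property (\ref{super2ii}) of Definition \ref{super2}. Suppose, for contradiction, that $A \cap B \notin \mathbb{S}$, and among all pairs of members of $\mathbb{S}$ whose intersection is not in $\mathbb{S}$, choose $A, B$ with $|A| + |B|$ minimal (or, alternatively, fix $A \cap B = C$ and take $A, B \supseteq C$ with $|A \cup B|$ minimal among counterexamples with that intersection). Since $A \cap B \notin \mathbb{S}$ we cannot have $A \subseteq B$ or $B \subseteq A$, so both $A \not\supseteq B$ and $B \not\supseteq A$ hold. Applying (\ref{super2ii}) to the pair $(A, B)$ yields an element $e \in A \setminus B$ with $A \setminus \{e\} \in \mathbb{S}$. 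Now $e \notin B$, so $(A \setminus \{e\}) \cap B = A \cap B$, and the pair $(A \setminus \{e\}, B)$ has strictly smaller size sum but the same (non-closed) intersection, contradicting minimality.

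The step I expect to be the main obstacle is making sure the extremal choice is set up so that the reduced pair genuinely still violates closure and still has the property that neither contains the other — but this is handled cleanly by keeping the intersection $A \cap B$ literally fixed while shrinking $A$ via repeated applications of (\ref{super2ii}). One clean way to phrase it: show by induction on $|A \setminus B|$ that for all $A, B \in \mathbb{S}$ we have $A \cap B \in \mathbb{S}$. The base case $|A \setminus B| = 0$ means $A \subseteq B$, so $A \cap B = A \in \mathbb{S}$. For the inductive step, if $A \setminus B \neq \emptyset$ then in particular $A \not\subseteq B$, hence $A \not\supseteq B$ is irrelevant — we just need $B \not\supseteq A$, which holds since $A \setminus B \neq \emptyset$; wait, we need the hypothesis of (\ref{super2ii}) which is $B \not\supseteq A$, i.e. $A \not\subseteq B$, exactly what we have. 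So (\ref{super2ii}) gives $e \in A \setminus B$ with $A' := A \setminus \{e\} \in \mathbb{S}$; then $|A' \setminus B| = |A \setminus B| - 1$ and $A' \cap B = A \cap B$, so by induction $A \cap B = A' \cap B \in \mathbb{S}$.

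Finally I would close the argument by assembling the lattice structure explicitly: with $\mathcal{E} \in \mathbb{S}$ and intersection-closure established, for $A, B \in \mathbb{S}$ set $A \wedge B = A \cap B \in \mathbb{S}$ and $A \vee B = \bigcap \{ F \in \mathbb{S} : A \cup B \subseteq F \}$, which lies in $\mathbb{S}$ by intersection-closure and is visibly the least upper bound; hence $(\mathbb{S}, \subseteq)$ is a lattice whose meet is given by set intersection, i.e.\ an intersection lattice. (Alternatively one can simply invoke the observation already recorded in the excerpt that a closure system is an intersection lattice, once one notes that a convex geometry is in particular a closure system — but giving the direct combinatorial argument from Definition \ref{super2} keeps the section self-contained.)
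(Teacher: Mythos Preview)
Your proof is correct and takes essentially the same approach as the paper: both arguments repeatedly apply condition~(\ref{super2ii}) of Definition~\ref{super2} to peel off elements of $A\setminus B$ one at a time (your induction on $|A\setminus B|$ makes this explicit, while the paper simply says ``by repeated application''), and then conclude that an intersection-closed family containing $\mathcal{E}$ is a lattice. The exploratory extremal argument in your first paragraph is unnecessary given the clean induction you give in the second, but it is not wrong.
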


\begin{proof}
    Let $A,B \in \mathbb{S}$.  By repeated application of Definition \ref{super2} condition (\ref{super2ii}), we may remove all of the elements in $A\setminus B$ from $A$ to produce $A\cap B \in \mathbb{S}$.  Therefore, $\mathbb{S}$ is an intersection meet-semilattice with a maximum element $\mathcal{E}$, hence $\mathbb{S}$ is an intersection lattice.
\end{proof}

Supersolvable lattices were first introduced by Stanley \cite[Definition 1.1]{stanley1972supersolvable}, motivated by the study of supersolvable groups.  In the same work, he defined  supersolvable join-distributive lattices.  Edelman proved that join-distributive lattices are cryptomorphic to antimatroids \cite[Theorem 3.3]{edelman1980meet}.\footnote{More accurately, Edelman proved that meet-distributive lattices are cryptomorphic to convex geometries.}  Hawrylycz and
Reiner demonstrated that the collection of all closure operators on a poset defines a supersolvable join-distributive lattice \cite[Theorem 10]{hawrylycz1993lattice}.  Armstrong provided a natural cryptomorphic description of supersolvable join-distributive lattices in the context of antimatroids, and applied this notion to the study of the sorting order of a Coxeter group \cite{armstrong2009sorting}.  

\begin{defn}\label{antim}\cite[Definition 2.11]{armstrong2009sorting}
    Let $(\mathcal{E},\mathbb{S})$ be a  set system. 
    Let $< $ be a total order of $\mathcal{E}$.  We say that $\mathbb{S}$ is a \emph{supersolvable antimatroid} if 
\begin{enumerate}

\item\label{antim1} $\emptyset \in \mathbb{S}$

\item\label{antim2} $\text{for every}~ A,B \in \mathbb{S} ~\text{with}~ B \nsubseteq A,~\text{for}~e =\text{min}_{<}(B\backslash A) ~\text{we have }~ A \cup \{e\} \in \mathbb{S}.$
\end{enumerate}

We say $\mathbb{S}$ is supersolvable with respect to $<$.
\end{defn}
Thus one has the following dual definition of supersolvable convex geometries, which was utilized by Gillespie in the study of matroid activities \cite[Proposition 15]{gillespie2020convexity}. 

\begin{defn}\label{supercg}\cite{armstrong2009sorting} \cite{gillespie2020convexity}\,\footnote{Definition \ref{supercg} and the phrase ``supersolvable convex geometry'' do not appear in \cite{armstrong2009sorting,gillespie2020convexity}, nonetheless we feel it is appropriate to attribute this definition to those sources as it is implicit in their discussions.} 
    Let $(\mathcal{E},\mathbb{S})$ be a set system. 
    Let $< $ be a total order of $\mathcal{E}$.  We say that $\mathbb{S}$ is a \emph{supersolvable convex geometry} if 
\begin{enumerate}

\item\label{supercg1} $\mathcal{E} \in \mathbb{S}$

\item\label{supercg2}  $\text{for every}~ A,B \in \mathbb{S} ~\text{with}~ B \nsupseteq A,~\text{for}~e =\text{min}_{<}(A\backslash B) ~\text{we have }~A\setminus \{e\} \in \mathbb{S}.$
\end{enumerate}

We say $\mathbb{S}$ is supersolvable with respect to $<$.
\end{defn}

De Concini-Procesi proved the following statement in the case of subspace arrangements, and van der Veer showed the result for meet-semilattices\footnote{Van der Veer utilized this result for proving that the motivic zeta function of a hyperplane arrangement is independent of the building set.}.

\begin{prop}\label{smaller}\cite[Section 2.5]{de1995wonderful}\cite[Lemma 2]{van2019combinatorial}
 Let $\mathcal{L}$ be a meet-semilattice and $B \in \mathbb{B}(\mathcal{L})$.  Let $<$ be a linear extension of $\mathcal{L}$, then for $x = \text{min}_{<}(B\backslash I(\mathcal{L}))$ we have $B\setminus \{x\} \in \mathbb{B}(\mathcal{L})$.
\end{prop}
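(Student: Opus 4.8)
The plan is to verify, using the characterization of Proposition~\ref{usefulchar}, that $B' := B \setminus \{x\}$ is a building set. Condition~(\ref{bscon1}) is immediate, since $x \notin I(\mathcal{L})$ leaves $I(\mathcal{L}) \subseteq B'$ intact, so the real work is condition~(\ref{bscon2}). I would take $y, z \in B'$ with $y \wedge y \neq \hat{0}$ replaced by $y \wedge z \neq \hat{0}$ and $y \vee z \in \mathcal{L}$; since $B$ is a building set we already know $y \vee z \in B$, so it suffices to show $y \vee z \neq x$, whence $y \vee z \in B'$.

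To this end I would argue by contradiction, assuming $y \vee z = x$. Then $y, z < x$ strictly: $y \leq y \vee z = x$, and $y = x$ would put $x = y \in B'$, contradicting $x \notin B'$ --- similarly for $z$. Because $<$ is a linear extension, $y < x$ and $z < x$ in $\mathcal{L}$ imply that $y$ and $z$ each precede $x$ in $<$; since $x = \text{min}_{<}(B \setminus I(\mathcal{L}))$ and $y, z \in B$, this forces $y, z \in I(\mathcal{L})$. Next I would invoke that $I(\mathcal{L})$ is itself a building set (Example~\ref{min}): writing $\text{max}\, I(\mathcal{L})_{\leq x} = \{w_1, \dots, w_l\}$, Definition~\ref{main} gives an isomorphism $\phi_x \colon \prod_{m=1}^l [\hat{0}, w_m] \xrightarrow{\cong} [\hat{0}, x]$, $(a_1, \dots, a_l) \mapsto a_1 \vee \cdots \vee a_l$. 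Injectivity of $\phi_x$ forces $[\hat{0}, w_m] \cap [\hat{0}, w_{m'}] = \{\hat{0}\}$ for $m \neq m'$ (the argument already used in the proof of Proposition~\ref{usefulchar}), so the nonzero irreducible $y$ lies below a unique $w_i$ and $z$ below a unique $w_j$.

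Now I would split into two cases. If $i = j$, then $x = y \vee z \leq w_i \leq x$, so $w_i = x \in I(\mathcal{L})$, contradicting $x \notin I(\mathcal{L})$. If $i \neq j$, then $\phi_x^{-1}(y)$ and $\phi_x^{-1}(z)$ are tuples supported in the distinct coordinates $i$ and $j$, so their coordinatewise meet in the product is the all-$\hat{0}$ tuple; since poset isomorphisms preserve meets, $y \wedge z = \hat{0}$, contradicting $y \wedge z \neq \hat{0}$. Either case is absurd, so $y \vee z \neq x$, condition~(\ref{bscon2}) holds for $B'$, and Proposition~\ref{usefulchar} gives $B' \in \mathbb{B}(\mathcal{L})$. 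I expect the main obstacle to be exactly this case analysis on whether $y$ and $z$ live in the same irreducible factor of $[\hat{0}, x]$; the point of the linear-extension hypothesis is precisely to reduce the question to irreducible $y, z$, after which the product structure of $[\hat{0}, x]$ relative to $I(\mathcal{L})$ closes both cases. The remaining steps are routine applications of Proposition~\ref{usefulchar}.
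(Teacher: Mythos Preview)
Your argument is correct and follows the same skeleton as the paper's own treatment (which appears as the proof of Proposition~\ref{super}, specialized to the pair $B' = B$, $B = I(\mathcal{L})$): verify the two conditions of Proposition~\ref{usefulchar} for $B\setminus\{x\}$, reduce condition~(\ref{bscon2}) to showing $y\vee z \neq x$, and use minimality of $x$ together with the linear-extension hypothesis to force $y,z \in I(\mathcal{L})$.

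The only divergence is in the last step. Once you know $y,z \in I(\mathcal{L})$ with $y\wedge z \neq \hat{0}$ and $y\vee z \in \mathcal{L}$, the paper simply applies Proposition~\ref{usefulchar} condition~(\ref{bscon2}) to the building set $I(\mathcal{L})$ itself to conclude $y\vee z \in I(\mathcal{L})$, hence $y\vee z \neq x$; this replaces your case analysis on the factors $w_1,\dots,w_l$ with a single line. Your unpacking of the product decomposition is really just reproving a fragment of the forward direction of Proposition~\ref{usefulchar} in this special situation, so the shortcut is available and preferable.
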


Feichtner-M\"uller established the following strengthened version.

\begin{prop}\label{FM}\cite[proof of Theorem 4.2]{feichtner2005topology}
Let $\mathcal{L}$ be an atomic meet-semilattice, and $B,B' \in \mathbb{B}(\mathcal{L})$ with $B' \subsetneq B$.  Let $<$ be a linear extension of $\mathcal{L}$, then for $x = \text{min}_{<}(B\backslash B')$ we have $B\setminus \{x\} \in \mathbb{B}(\mathcal{L})$.
\end{prop}

We offer the following extension of Proposition \ref{FM}.  We note that by Proposition \ref{convexgeometriesarelattices}, the closed sets of a convex geometry form an intersection lattice, hence Proposition \ref{super} subsumes Proposition \ref{lattice}.  This result can be equivalently interpreted as an extension of Proposition \ref{FM} to the setting where $B$ and $B'$ may not be comparable, or as a combination of Proposition \ref{FM} and Proposition \ref{lattice}.

\begin{prop}\label{super}
Let $\mathcal{L}$ be a meet-semilattice, then $(\mathcal{L}^+,\mathbb{B}(\mathcal{L}))$ is a supersolvable convex geometry which is supersolvable with respect to any linear extension of $\mathcal{L}$.
\end{prop}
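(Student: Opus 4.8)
The plan is to verify the two conditions of Definition \ref{supercg} for the set system $(\mathcal{L}^+, \mathbb{B}(\mathcal{L}))$ with respect to an arbitrary but fixed linear extension $<$ of $\mathcal{L}$. Condition (\ref{supercg1}) is immediate: $\mathcal{L}^+$ is the maximum building set by Example \ref{max}. So the entire content is condition (\ref{supercg2}): given $B, B' \in \mathbb{B}(\mathcal{L})$ with $B \nsupseteq B'$, setting $x = \min_<(B \setminus B')$, I must show $B \setminus \{x\} \in \mathbb{B}(\mathcal{L})$.

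First I would observe that, since $I(\mathcal{L}) \subseteq B'$ (as $B'$ is a building set) and $x \notin B'$, we have $x \notin I(\mathcal{L})$; in particular $B \setminus I(\mathcal{L})$ is nonempty and $x$ is a candidate for its $<$-minimum — but the subtlety is that $x$ need not literally be $\min_<(B \setminus I(\mathcal{L}))$, so Proposition \ref{smaller} does not apply verbatim. The strategy is instead to reprove the Proposition \ref{smaller}-style argument while keeping track of why it works for this particular $x$. The key point I expect to need is a \emph{minimality lemma}: if $x = \min_<(B \setminus B')$ where $B' \supseteq I(\mathcal{L})$ is a building set, then $x$ is minimal in $B$ with respect to the partial order $\leq$ among the elements of $B \setminus B'$ that could obstruct removal. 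More precisely, I claim that no element $z \in B$ with $z < x$ in $\mathcal{L}$ lies in $B \setminus B'$ — because $z < x$ implies $z <_L x$ for the linear extension, contradicting minimality of $x$ — hence every element of $B$ strictly below $x$ (in $\mathcal{L}$) also lies in $B'$. This is the hook that lets the removal go through.

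Then I would check the two building-set axioms of Proposition \ref{usefulchar} for $B \setminus \{x\}$. Axiom (\ref{bscon1}): $I(\mathcal{L}) \subseteq B \setminus \{x\}$ since $x \notin I(\mathcal{L})$. Axiom (\ref{bscon2}) is the crux: suppose $y, z \in B \setminus \{x\}$ with $y \wedge z \neq \hat 0$ and $y \vee z \in \mathcal{L}$; since $B$ is a building set, $y \vee z \in B$, and I must rule out $y \vee z = x$. If $y \vee z = x$, then $y, z \leq x$. I want to show at least one of $y, z$ equals $x$, which would be a contradiction. Here I use the building set structure of $B'$ together with the minimality lemma: since $y, z \in B$ and $y, z < x$ (they can't equal $x$, as $x \notin B \setminus \{x\}$), the minimality lemma gives $y, z \in B'$; but $y \wedge z \neq \hat 0$ and $y \vee z = x \in \mathcal{L}$, so by axiom (\ref{bscon2}) applied to the building set $B'$ we get $x = y \vee z \in B'$, contradicting $x \in B \setminus B'$. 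Hence $y \vee z \neq x$, so $y \vee z \in B \setminus \{x\}$, establishing the axiom.

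The main obstacle, as flagged, is pinning down exactly the right minimality statement — ensuring that "the $<$-least element not in some other building set" really does behave like a removable element, and that the comparison between the linear extension $<$ and the partial order $\leq$ is invoked correctly (elements $<$-below $x$ that are also $\leq$-below $x$ are the dangerous ones, and those are precisely the ones forced into $B'$). Once that bookkeeping is in place, both Proposition \ref{usefulchar} axioms fall out cleanly, and since the argument used an arbitrary linear extension, supersolvability with respect to every linear extension follows. I would also remark at the end that, by Proposition \ref{convexgeometriesarelattices}, this recovers Proposition \ref{lattice}, and that a supersolvable convex geometry supersolvable with respect to \emph{every} linear extension of an underlying poset is a rather rigid structure worth emphasizing.
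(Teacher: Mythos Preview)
Your argument is correct and essentially identical to the paper's: both verify Definition~\ref{supercg} via Proposition~\ref{usefulchar}, observing that $x\notin I(\mathcal{L})$ since $I(\mathcal{L})$ lies in the other building set, and ruling out $y\vee z=x$ by noting that $y,z<x$ forces $y,z$ into that other building set by $<$-minimality of $x$, whence $x=y\vee z$ would lie there too. One small slip: your hypothesis should read $B'\nsupseteq B$ (equivalently $B\setminus B'\neq\emptyset$), not $B\nsupseteq B'$, so that $\min_{<}(B\setminus B')$ is defined; with that correction the proofs match line for line up to swapping the labels $B\leftrightarrow B'$.
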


\begin{proof}
Let $<$ be a linear extension of $\mathcal{L}$. As was noted in example \ref{max}, $\mathcal{L}^+ \in \mathbb{B}$ thus condition (\ref{supercg1}) is satisfied. To verify condition (\ref{supercg2}), let $B,B' \in \mathbb{B}$ with $B'\not\subseteq B$, and $x = \text{min}_{<}(B'\backslash B)$.  We claim that $B'\backslash \{x\} 
\in 
\mathbb{B}$, and we verify the conditions of Proposition \ref{usefulchar}.
Since $I(\mathcal{L}) \subseteq B$ and $x \not \in B$, $x$ is not irreducible so $B'\backslash \{x\}$ contains $I(\mathcal{L})$.  Let $y,z \in B'\backslash \{x\}$ such that $y \wedge z \neq \hat{0}$. If $y \vee z \in \mathcal{L}$, by Proposition \ref{usefulchar}, $y \vee z \in B'$.  We wish to show that $y \vee z \neq x$ for which we need only consider $y,z <x$. By the minimality of $x$, we know $y,z \in B$. By Proposition \ref{usefulchar}, $y \vee z \in B$ so certainly $y \vee z \neq x$.  Thus $y \vee z \in B'\backslash \{x\}$.  By Proposition \ref{usefulchar}, it follows that $B'\backslash \{x\} \in \mathbb{B}$. 
\end{proof}

\begin{cor}
$\mathbb{B}(\mathcal{L})$ is a supermodular lattice.  
\end{cor}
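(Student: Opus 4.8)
The plan is to exhibit an explicit rank function, namely $r(B)=|B|$ for $B\in\mathbb{B}(\mathcal{L})$, and to check directly that it witnesses supermodularity. By Proposition~\ref{super}, $(\mathcal{L}^+,\mathbb{B}(\mathcal{L}))$ is a convex geometry, so by Proposition~\ref{convexgeometriesarelattices} (or Proposition~\ref{lattice}) it is an intersection lattice; in particular the meet in $\mathbb{B}(\mathcal{L})$ is intersection and the join is $B\vee B'=\sigma(B\cup B')$ for the associated closure operator $\sigma$, with $B\cup B'\subseteq\sigma(B\cup B')$.

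First I would verify that $r$ is a rank function in the sense of the definition. Monotonicity is immediate, since $B\subseteq B'$ forces $|B|\le|B'|$. For the covering condition, suppose $B'$ covers $B$ in $(\mathbb{B}(\mathcal{L}),\subseteq)$. Since $B\subsetneq B'$ we have $B\nsupseteq B'$, so Definition~\ref{super2}(\ref{super2ii}) applied to the convex geometry $(\mathcal{L}^+,\mathbb{B}(\mathcal{L}))$ with $A=B'$ and $B=B$ produces an element $e\in B'\setminus B$ with $B'\setminus\{e\}\in\mathbb{B}(\mathcal{L})$. Because $e\notin B$ we have $B\subseteq B'\setminus\{e\}\subsetneq B'$, and since $B'$ covers $B$ this forces $B'\setminus\{e\}=B$; hence $B'=B\cup\{e\}$ and $r(B')=r(B)+1$. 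So $r$ satisfies both conditions of a rank function, and $\mathbb{B}(\mathcal{L})$ is a ranked lattice.

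Finally, supermodularity follows by inclusion--exclusion: for $B,B'\in\mathbb{B}(\mathcal{L})$,
\begin{align*}
r(B\wedge B')+r(B\vee B') &= |B\cap B'| + |\sigma(B\cup B')| \\
&\ge |B\cap B'| + |B\cup B'| = |B|+|B'| = r(B)+r(B').
\end{align*}
I do not anticipate a genuine obstacle here; the one point requiring care is invoking the convex-geometry axiom of Definition~\ref{super2} with the larger building set playing the role of $A$, so as to conclude that covers in $\mathbb{B}(\mathcal{L})$ are single-element extensions and hence that cardinality is a bona fide rank function. (Alternatively, one could quote that a convex geometry is a meet-distributive, hence lower semimodular, finite lattice, and that lower semimodular finite lattices are supermodular, but the cardinality argument above is self-contained and slightly shorter.)
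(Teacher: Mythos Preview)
Your proof is correct and follows essentially the same route as the paper's: exhibit cardinality as a rank function, use the convex-geometry deletion axiom to show covers are single-element extensions, and then apply inclusion--exclusion together with $B\cup B'\subseteq\sigma(B\cup B')$. The only cosmetic differences are that the paper normalizes the rank to $r(B)=|B\setminus I(\mathcal{L})|$ so that the minimum building set has rank~$0$, and it invokes Proposition~\ref{super} (supersolvability) rather than Definition~\ref{super2} to produce the deletable element; neither change is material.
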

\begin{proof}
This is a standard property of any convex geometry -- we provide a proof here.  For $B \in \mathbb{B}(\mathcal{L})$, set $B_{\circ}:=B\backslash I(\mathcal{L})$.  Define the function $r:\mathbb{B}\rightarrow \mathbb{Z}_{\geq 0}$ by $r(B)= |B_{\circ}|$, and note that $r(I(\mathcal{L}))=0$. Let $B,B'\in \mathbb{B}$ such that $B'$ covers $B$.  By Proposition \ref{super}, if $x = \text{min}_<(B'\setminus B)$, then $B'\backslash \{x\} \in \mathbb{B}$, thus $B' = B \cup \{x\}$, and $r(B')=r(B)+1$. 
 Therefore $\mathbb{B}(\mathcal{L})$ is ranked.  To see that $r$ is supermodular, let $B,B' \in \mathbb{B}$.  Then $
r(B \vee B')=|(B\vee B')_{\circ}|\geq|B_{\circ}\cup B'_{\circ}|
$ and 
 $
r(B \wedge B')=|(B \wedge B')_{\circ}|=|(B \cap B')_{\circ}|=|B_{\circ} \cap B'_{\circ}|$ together imply
\[
r(B \vee B')+r(B \wedge B')\geq |B_{\circ}\cup B'_{\circ}|+|B_{\circ} \cap B'_{\circ}|=|B_{\circ}|+ |B'_{\circ}|=r(B)+ r(B').
\]
\end{proof}

  \begin{defn}
    Let $(\mathcal{E}, \mathbb{S}, \sigma)$ be a closure system, and let $A \subseteq \mathcal{E}$.  An element $x \in A$ is called an \emph{extreme point} of $A$ if $x \notin \sigma(A \setminus \{x\})$.  We denote the set of extreme points of $A$ by $\text{ex}(A)$.
\end{defn}

We have the following characterization of the extreme points of a building set.

\begin{prop}\label{extremebuild}
Let $\mathcal{L}$ be a meet-semilattice and  $B \in \mathbb{B}(\mathcal{L})$.  Then $x \in \text{ex}(B)$ if and only if 

\begin{enumerate}
    \item $x \notin I(\mathcal{L})$ and 
    \item if $y,z \in B\setminus \{x\}$ such that $y \vee z = x$, then $y \wedge z =\hat{0}$.
\end{enumerate}  
\end{prop}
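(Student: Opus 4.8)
The plan is to reduce the statement to a property of building sets and then invoke Proposition~\ref{usefulchar}. Recall from the discussion following Proposition~\ref{lattice} that $\mathbb{B}(\mathcal{L})$ is precisely the collection of closed sets of the associated closure operator $\sigma$ on the ground set $\mathcal{L}^+$, and that by Proposition~\ref{super} this closure system is a convex geometry. The first step is the general fact that in any closure system, if $A$ is closed and $x\in A$, then $x\in\text{ex}(A)$ if and only if $A\setminus\{x\}$ is closed: indeed $A\setminus\{x\}\subseteq\sigma(A\setminus\{x\})\subseteq\sigma(A)=A$, so $\sigma(A\setminus\{x\})$ equals either $A\setminus\{x\}$ or $A$, and $x\notin\sigma(A\setminus\{x\})$ exactly when it equals $A\setminus\{x\}$. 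Applied with $A=B$, this reduces the proposition to showing that $B\setminus\{x\}\in\mathbb{B}(\mathcal{L})$ if and only if conditions (1) and (2) hold.

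Next I would check this equivalence against the two clauses of Proposition~\ref{usefulchar} for the set $B\setminus\{x\}$. Since $B$ is a building set, $I(\mathcal{L})\subseteq B$, so clause (\ref{bscon1}) holds for $B\setminus\{x\}$ exactly when $x\notin I(\mathcal{L})$, i.e. condition (1). For clause (\ref{bscon2}): given $y,z\in B\setminus\{x\}$ with $y\wedge z\neq\hat{0}$ and $y\vee z\in\mathcal{L}$, the fact that $B$ is a building set already forces $y\vee z\in B$, so $y\vee z$ can fail to lie in $B\setminus\{x\}$ only when $y\vee z=x$. Hence clause (\ref{bscon2}) holds for $B\setminus\{x\}$ if and only if no pair $y,z\in B\setminus\{x\}$ satisfies $y\vee z=x$ together with $y\wedge z\neq\hat{0}$ — and the side hypothesis $y\vee z\in\mathcal{L}$ is automatic once $y\vee z=x$ — which is exactly the contrapositive of condition (2). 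Combining the two reductions yields the claim.

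I do not expect a genuine obstacle: all the substance is carried by Proposition~\ref{usefulchar} and by the closure-theoretic generality, and the remaining work is to line up the clauses carefully. The points requiring a little attention are that the first reduction uses that $B$ is itself closed (which is exactly the hypothesis $B\in\mathbb{B}(\mathcal{L})$), and that when $y\vee z=x$ the join automatically exists in $\mathcal{L}$, so no separate bookkeeping about existence of joins is needed.
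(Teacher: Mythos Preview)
Your proof is correct and takes essentially the same approach as the paper, which simply states that the result is a direct consequence of Proposition~\ref{usefulchar}; you have spelled out the details of that reduction, including the general closure-theoretic fact that for a closed set $B$, an element $x$ is extreme if and only if $B\setminus\{x\}$ is closed. The reference to Proposition~\ref{super} is harmless but unnecessary---only the closure-system structure (not the convex-geometry axiom) is used.
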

\begin{proof}
 This is a direct consequence of Proposition \ref{usefulchar}.  
\end{proof}

We now introduce a new characterization of supersolvable convex geometries, which we feel is a bit closer to the definition of a building set.

\begin{defn}
    Let $\sigma: 2^{\mathcal{E}} \rightarrow 2^{\mathcal{E}}$ be a closure operator and $<$ be a total order on $\mathcal{E}$.  We say that $\sigma$ is a \emph{supersolvable closure operator} (with respect to $<$) if for each closed set $A$ and $e \in \mathcal{E}\setminus A$, we have that 
    
    \begin{itemize}
    \item if $f \in \sigma(A \cup \{e\})\setminus (A \cup \{e\})$ then $e<f$.
    \end{itemize}
\end{defn}

\begin{prop}\label{altsuper}
 Let $(\mathcal{E},\mathbb{S})$ be a set system, then $\mathbb{S}$ is a supersolvable convex geometry if and only if $\mathbb{S}$ is the collection of closed sets of a supersolvable closure operator.
\end{prop}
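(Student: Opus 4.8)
The plan is to prove both directions by carefully unpacking the two definitions and noting they are logically close. Throughout, write $\sigma$ for the closure operator associated to an intersection lattice $\mathbb{S}$ via $\sigma(X) = \bigcap_{A \in \mathbb{S},\, X \subseteq A} A$, using the canonical identification from the Remark following Lemma~\ref{capclosed}; recall that by Proposition~\ref{convexgeometriesarelattices} a convex geometry's closed sets form an intersection lattice, so $\sigma$ is well-defined whenever $\mathbb{S}$ is a convex geometry, and conversely any supersolvable closure operator has $\mathbb{S}$ an intersection lattice containing $\mathcal{E}$.

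For the forward direction, assume $(\mathcal{E},\mathbb{S})$ is a supersolvable convex geometry with respect to a total order $<$. I would show $\sigma$ is a supersolvable closure operator with respect to the \emph{reverse} order, or else check which orientation matches; the key point is to relate the "remove $\min(A\setminus B)$" condition of Definition~\ref{supercg} to the "new elements of $\sigma(A\cup\{e\})$ are larger than $e$" condition. Concretely, suppose $A$ is closed, $e \notin A$, and $f \in \sigma(A \cup \{e\}) \setminus (A \cup \{e\})$. Set $C = \sigma(A \cup \{e\})$, a closed set strictly containing the closed set $A$, with $f, e \in C \setminus A$. Apply condition~(\ref{supercg2}) of Definition~\ref{supercg} to the pair $C \nsupseteq$... — actually the cleaner route is: among all closed sets $D$ with $A \cup \{e\} \subseteq D \subsetneq C$ there is none, so $C$ covers... no. The right move is to use the supersolvable convex geometry axiom to peel elements off $C$ down to $A$: repeatedly removing $\min_<(C \setminus A)$ keeps us in $\mathbb{S}$, and the sequence of removed elements, read in reverse, gives a maximal chain from $A$ to $C$ in which at each step we \emph{add} the current $<$-minimum of the remaining difference. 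Since $e \in C \setminus A$, at some point $e$ gets added; I claim $e$ must be the \emph{first} element added (equivalently the last removed), because otherwise, when $e$ is added to some closed set $A'$ with $A \subseteq A' \subsetneq C$ and $e = \min_<(C \setminus A')$... hmm, this forces $e < f$ only if $f$ is still in the difference, i.e. $f \notin A'$. So the real content is: $f$ cannot be added before $e$. Suppose it were; then there is a closed set $A''$ with $A \subseteq A'' \subsetneq C$, $f \notin A''$, $e \notin A''$ (since $e$ added later), and $f = \min_<(A'' \cup \{f\} \text{-relevant difference})$ — but $f = \min_<(C\setminus A'')$ would give $f < e$, and then I need a contradiction. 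I expect the contradiction comes from minimality combined with $A$ being closed: if $f < e$ and both are in $C \setminus A$ with $A$ closed and $C = \sigma(A \cup \{e\})$, then... this is exactly where I'd invoke that $A \cup \{e\}$ generates $C$, so removing $e$'s "contribution" should remove $f$ too. The careful bookkeeping of this chain argument is the main obstacle.

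For the reverse direction, assume $\sigma$ is a supersolvable closure operator with respect to $<$ and let $\mathbb{S}$ be its closed sets. Then $\mathcal{E} \in \mathbb{S}$ trivially, giving (\ref{supercg1}). For (\ref{supercg2}), take $A, B \in \mathbb{S}$ with $B \nsupseteq A$, let $e = \min_<(A \setminus B)$, and we must show $A \setminus \{e\} \in \mathbb{S}$. Consider the closed set $C = \sigma((A \cap B) \cup \{e\})$... actually the slicker approach: I first establish that $\mathbb{S}$ is a convex geometry, i.e. verify the anti-exchange property of Definition~\ref{antiex}, and separately verify the supersolvable peeling. For anti-exchange: given closed $A$ and distinct $x, y \notin A$ with $y \in \sigma(A \cup \{x\})$, we get $x < y$ by the supersolvable property (since $y \in \sigma(A\cup\{x\}) \setminus (A \cup \{x\})$); if also $x \in \sigma(A \cup \{y\})$ we'd get $y < x$, contradiction. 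So anti-exchange holds and $(\mathcal{E},\mathbb{S})$ is a convex geometry. It remains to upgrade to \emph{supersolvable} convex geometry with respect to $<$: given $A, B$ closed with $B \nsupseteq A$ and $e = \min_<(A \setminus B)$, I want $A \setminus \{e\}$ closed. Since $(\mathcal{E},\mathbb{S})$ is already a convex geometry, Definition~\ref{super2}(\ref{super2ii}) gives \emph{some} $e' \in A \setminus B$ with $A \setminus \{e'\} \in \mathbb{S}$; I must show we may take $e' = e$. Suppose $A \setminus \{e\}$ is not closed; then $e \in \sigma(A \setminus \{e\})$, and since $\sigma(A \setminus \{e\}) \subseteq \sigma(A) = A$ we get $\sigma(A \setminus \{e\}) = A$. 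Now among all closed proper subsets of $A$ obtained by the convex geometry peeling, pick a maximal chain $A = A_0 \supset A_1 \supset \cdots$ down to $\sigma(A \cap B)$ or to $A \cap B$; at the step where we would need to remove something from $A_{i}$, the removable elements lie in $A_i \setminus B$, and the supersolvable closure property (applied to the complementary "adding" direction, reading the chain bottom-up) forces that the element removable from $A$ itself must be $\min_<$ of $A \setminus B$. I would formalize this by induction on $|A \setminus B|$: if $A \setminus \{e\}$ is closed, done; otherwise, let $A' \supsetneq A \setminus \{e\}$ be a minimal closed set containing $A \setminus \{e\}$ — but $A$ itself contains it, and $e \in \sigma(A \setminus \{e\})$ forces $A' = A$, so $A$ covers $A \setminus$(something); applying the supersolvable closure operator axiom to the closed set $A \cap B$ and the element $e$ (noting $e \notin A \cap B$) together with the fact that $\sigma((A\cap B) \cup \{e\}) \subseteq A$ should pin down $e$ as minimal. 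I expect to assemble a clean proof from these pieces; the delicate point throughout, and the one I would spend the most care on, is translating between the "remove the minimum of $A \setminus B$" formulation of Definition~\ref{supercg} and the "new closure elements exceed the seed" formulation of a supersolvable closure operator — in particular getting the direction of the inequality and the quantifiers to line up, and handling the passage through the meet $A \cap B$ in the intersection lattice $\mathbb{S}$.
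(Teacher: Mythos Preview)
Your proposal contains correct ideas but misses the clean arguments in both directions and leaves the reverse direction genuinely incomplete.

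\textbf{Forward direction.} Your chain-peeling argument is unnecessary. The paper's proof is one step: assume $\sigma$ is not supersolvable, so there exist a closed $A$, and $e,f\notin A$ with $f\in\sigma(A\cup\{e\})\setminus(A\cup\{e\})$ and $f<e$; take $f$ minimal with this property, so $f=\min_<\bigl(\sigma(A\cup\{e\})\setminus A\bigr)$. Apply Definition~\ref{supercg}(\ref{supercg2}) to the pair $\bigl(\sigma(A\cup\{e\}),\,A\bigr)$: it gives $\sigma(A\cup\{e\})\setminus\{f\}\in\mathbb{S}$. But $A\cup\{e\}\subseteq\sigma(A\cup\{e\})\setminus\{f\}$, so this closed set contains the closure $\sigma(A\cup\{e\})$, contradicting $f\in\sigma(A\cup\{e\})$. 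You circled this (``removing $e$'s contribution should remove $f$ too'') but thought you needed a maximal chain; you do not. Also, no order reversal is needed---drop that worry.

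\textbf{Reverse direction.} Your anti-exchange observation is correct and elegant: if $y\in\sigma(A\cup\{x\})\setminus(A\cup\{x\})$ then $x<y$, and symmetrically $x\in\sigma(A\cup\{y\})$ would force $y<x$. This does show $(\mathcal{E},\mathbb{S})$ is a convex geometry. However, your upgrade to \emph{supersolvable} is where the real gap is. Knowing that \emph{some} $e'\in A\setminus B$ has $A\setminus\{e'\}$ closed does not help you show it is the minimum one, and your induction on $|A\setminus B|$ never gets off the ground. The paper's argument is the one you are groping for when you mention $A\cap B$: list $A\setminus B=\{e_0<e_1<\cdots<e_k\}$ with $e_0=e$; if $A\setminus\{e_0\}$ is not closed then $e_0\in\sigma(A\setminus\{e_0\})=\sigma\bigl((A\cap B)\cup\{e_1,\ldots,e_k\}\bigr)$. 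Take the smallest $i$ with $e_0\in\sigma\bigl((A\cap B)\cup\{e_1,\ldots,e_i\}\bigr)$, set $X=\sigma\bigl((A\cap B)\cup\{e_1,\ldots,e_{i-1}\}\bigr)$ (closed, does not contain $e_0$ or $e_i$), and apply the supersolvable closure axiom to $X$ and $e_i$: since $e_0\in\sigma(X\cup\{e_i\})\setminus(X\cup\{e_i\})$ you get $e_i<e_0$, a contradiction. This ``find the step where $e_0$ enters'' trick is the missing idea in your sketch.
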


\begin{proof}
    Suppose that $(\mathcal{E},\mathbb{S})$ is a supersolvable convex geometry with associated closure operator $\sigma$.   Suppose that $\sigma$ is not supersolvable, then there exists a closed set $A$ and $e,f \in \mathcal{E}\setminus A$ such that $f \in \sigma(A \cup \{e\})\setminus (A \cup \{e\})$, and $f< e$.  Take $f$ to be the minimum such element with respect to $<$. Applying condition (\ref{supercg2}) of  Definition \ref{supercg} to the pair of sets $A, \sigma(A \cup \{e\})$, we have that $\sigma(A \cup \{e\}) \setminus \{f\} \in \mathbb{S}$, but this contradicts the definition of $\sigma(A \cup \{e\}) $.  
    
    Conversely, suppose that $\sigma$ is a supersolvable closure operator on $\mathcal{E}$ with $\mathbb{S}$ the associated closed sets.  
    Let $A,B \in \mathbb{S}$ and let $e = \text{min}_{<}(B\setminus A)$.  Suppose for contradiction that $B\setminus \{e\}$ is not a closed set.  As $\sigma$ is a closure operator, we know by Lemma \ref{capclosed} that $A\cap B$ is closed.  Let $e=e_0 < \ldots<e_k$ be the elements of $B\setminus A$.  Let $i$ be the smallest index such that $e_0 \in \sigma((A\cap B) \cup \{e_1,\ldots, e_i\})$.  Let $X = \sigma((A\cap B) \cup \{e_1,\ldots, e_{i-1}\})$, then $e_0 \in \sigma(X \cup\{e_i\})\setminus (X\cup\{e_i\})$, but $e_0 < e_i$ contradicting supersolvability of $\sigma$.
\end{proof}

 From Proposition \ref{altsuper}, it follows that one of the most fundamental examples of a convex geometry, the upper ideals in a poset, form a supersolvable convex geometry with respect to any linear extension of the underlying poset.\footlabel{oderfilt}{This observation is related to Adiprasito-Huh-Katz's inductive proof of the K\"ahler package for matroids \cite{adiprasito2018hodge}, where they define certain fans induced by upper ideals (order filters) in the lattice of flats.  We note that while an upper ideal in a meet-semilattice $\mathcal{L}$ is typically not a building set, it becomes a building set after unioning with $I(\mathcal{L})$, although for the lattice of flats of a matroid, the associated Bergman fan is not the one considered in \cite{adiprasito2018hodge}.}

Proposition \ref{super} and Proposition \ref{altsuper} demonstrate that the closure operator for building sets is supersolvable with respect to any linear extension of $\mathcal{L}$.  We will now apply Proposition \ref{usefulchar} for computing $\sigma(X)$, which in turn combined with Proposition \ref{altsuper} gives an alternate proof of Proposition \ref{super}. 

\begin{prop}\label{closurealgo}
 Let $\mathcal{L}$ be a meet-semilattice and $X \subseteq \mathcal{L}^+$.  Set $X_0:=X \cup I(\mathcal{L})$.  Given $X_i$, set $Y_i := \{a\vee b \in \mathcal{L} : a,b \in X_i\,,\, a\wedge b \neq \hat{0}\}$ and $X_{i+1} := X_i \cup Y_i.$  Let $k$ be such that $X_{k+1}=X_k$, then $\sigma(X) = X_k$.
\end{prop}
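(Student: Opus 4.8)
The plan is to show directly that $X_k$ is the smallest building set containing $X$; since by Proposition \ref{lattice} the collection $\mathbb{B}(\mathcal{L})$ is an intersection lattice, and by the canonical identification of intersection lattices with closure operators (the remark following Lemma \ref{capclosed}) the associated closure operator $\sigma$ sends $X$ to $\bigcap_{B\in\mathbb{B}(\mathcal{L}),\,X\subseteq B}B$, this intersection is the minimum building set containing $X$, so identifying $X_k$ with it finishes the argument. First I would note that $X_0\subseteq X_1\subseteq\cdots$ is a weakly increasing chain of subsets of the finite set $\mathcal{L}^+$, hence it stabilizes; an index $k$ with $X_{k+1}=X_k$ exists, and then $X_j=X_k$ for all $j\geq k$ (since $Y_{k+1}=Y_k$ and so on inductively).

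Next I would verify that $X_k$ is a building set via Proposition \ref{usefulchar}. Condition (\ref{bscon1}) holds because $I(\mathcal{L})\subseteq X_0\subseteq X_k$. For condition (\ref{bscon2}), suppose $a,b\in X_k$ with $a\wedge b\neq\hat{0}$ and $a\vee b\in\mathcal{L}$; then by the definition of $Y_k$ we have $a\vee b\in Y_k\subseteq X_{k+1}=X_k$. Thus $X_k\in\mathbb{B}(\mathcal{L})$, and plainly $X\subseteq X_0\subseteq X_k$.

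Finally I would show that $X_k$ is contained in every building set containing $X$, by proving $X_i\subseteq B$ by induction on $i$ for an arbitrary $B\in\mathbb{B}(\mathcal{L})$ with $X\subseteq B$. The base case $X_0=X\cup I(\mathcal{L})\subseteq B$ uses $X\subseteq B$ and condition (\ref{bscon1}) of Proposition \ref{usefulchar}. For the inductive step, if $X_i\subseteq B$, then for any $a,b\in X_i$ with $a\wedge b\neq\hat{0}$ and $a\vee b\in\mathcal{L}$, condition (\ref{bscon2}) of Proposition \ref{usefulchar} gives $a\vee b\in B$; hence $Y_i\subseteq B$ and $X_{i+1}=X_i\cup Y_i\subseteq B$. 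Taking $i=k$ yields $X_k\subseteq B$. Combining the two directions, $X_k$ is the minimum of $\{B\in\mathbb{B}(\mathcal{L}):X\subseteq B\}$, which equals $\sigma(X)$.

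I do not expect a serious obstacle: termination is forced by finiteness, and both the ``$X_k$ is a building set'' and the ``$X_k$ is minimal'' directions are immediate applications of Proposition \ref{usefulchar}. The only point requiring care is the bookkeeping that the closure operator $\sigma$ attached to the intersection lattice $\mathbb{B}(\mathcal{L})$ indeed sends $X$ to the smallest building set containing it, which is exactly what the identification of intersection lattices with closure operators provides.
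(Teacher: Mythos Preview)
Your proof is correct and follows essentially the same approach as the paper: both arguments establish termination by finiteness, verify $X_k\in\mathbb{B}(\mathcal{L})$ via Proposition~\ref{usefulchar}, and then prove by induction on $i$ that each $X_i$ lies in any (equivalently, the smallest) building set containing $X$. The only cosmetic difference is that the paper phrases the induction directly in terms of $\sigma(X)$ rather than an arbitrary building set $B\supseteq X$, but the content is identical.
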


Note that because $\mathcal{L}$ is finite and $X_i\subseteq X_{i+1}$, there exists some $k$ such that $X_{k+1}=X_k$.

\begin{proof}
By Proposition \ref{usefulchar}, it follows that $X_k$ is a building set, so $\sigma(X) \subseteq X_k$.  We establish the opposite inclusion by induction.  Clearly, $X_0 \subseteq \sigma(X)$.  Suppose for some $i$ we have that $X_i \subseteq \sigma(X)$, then $Y_i \subseteq \sigma(X)$, hence $X_{i+1} \subseteq \sigma(X)$.  
\end{proof}

We briefly describe a general method to produce building sets. First, take some $X,X'\subseteq \mathcal{L}^+$.  Construct $\sigma(X),\sigma(X')$ using Proposition \ref{closurealgo}. If $\sigma(X) \nsubseteq \sigma(X')$, we can apply supersolvability to remove elements from $\sigma(X')\setminus \sigma(X)$ one at a time.  All of the intermediate objects will be building sets.

\section{Restrictions of building sets and nested set complexes}

In this section we utilize the perspective developed in this article for proving Theorem \ref{1strest} which states that building sets and nested set complexes behave well with respect to certain restrictions of meet-semilattices.  Our theorem is inspired by the following wonderful result of Mantovani-Padrol-Pilaud \cite{mantovani2025facial} for oriented matroids, which is motivated by fundamental polyhedral considerations.\footnote{We remark that the recent work of Brauner-Eur-Pratt-Vlad \cite{brauner2024wondertopes} has some overlap with the work of Mantovani-Padrol-Pilaud \cite{mantovani2025facial}, but not in this particular aspect it seems.}
\begin{thm}\label{ormatrest}\cite{mantovani2025facial}
Suppose that $(E,\mathcal{M})$ is a simple acyclic oriented matroid with Las Vergnas' face lattice $\mathcal{F}$, and let $\mathcal{B}_n=(2^{E},
\subseteq)$ be the Boolean lattice where $|E|=n$.  Let $\mathcal{F} \hookrightarrow \mathcal{B}_n$ be the natural embedding. Then

\begin{enumerate}
    \item $\mathbb{B}(\mathcal{F}) = \{B \cap \mathcal{F}: B \in \mathbb{B}(\mathcal{B}_n), I(\mathcal{F}) \subseteq B\}.$
    \item If $B \in \mathbb{B}(\mathcal{B}_n)$ and  $I(\mathcal{F}) \subseteq B$, then   $\mathcal{N}(B\cap \mathcal{F})\subseteq \mathcal{N}(B)$.
\end{enumerate}

\end{thm}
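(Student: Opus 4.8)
The plan is to prove the two parts directly, using Proposition \ref{usefulchar} as the main tool, together with Proposition \ref{closurealgo} and Lemmas \ref{nestedlem} and \ref{wedgeineqlem}; Theorem \ref{ormatrest} should be the instance $\mathcal{F}\hookrightarrow\mathcal{B}_n$ of a general statement (Theorem \ref{1strest}) about meet-semilattice order embeddings. Two structural inputs are needed about the Las Vergnas face lattice: first, that the intersection of two faces is a face, so that $\mathcal{F}\hookrightarrow\mathcal{B}_n$ is a meet-semilattice order embedding with $\hat{0}=\emptyset\in\mathcal{F}$ (hence $x\wedge^{\mathcal{F}}y=x\cap y$, and by Lemma \ref{wedgeineqlem}, $x\cup y=x\vee^{\mathcal{B}_n}y\leq x\vee^{\mathcal{F}}y$ for $x,y\in\mathcal{F}$); and second, the \emph{restriction property}: for $x,y\in\mathcal{F}$ with $x\cap y\neq\hat{0}$, the face $x\vee^{\mathcal{F}}y$ already lies in $\sigma_{\mathcal{B}_n}(I(\mathcal{F})\cup\{x\cup y\})$, i.e.\ it is reconstructed from $x\cup y$ and the irreducibles of $\mathcal{F}$ by iterated joining of pairs with nonzero meet. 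Isolating this second property, and verifying it from oriented matroid axioms, is, I expect, the real work; everything else is bookkeeping with the results above.

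For part (1) I prove the two inclusions. \emph{$(\supseteq)$} Given $B\in\mathbb{B}(\mathcal{B}_n)$ with $I(\mathcal{F})\subseteq B$, I check conditions (\ref{bscon1}) and (\ref{bscon2}) of Proposition \ref{usefulchar} for $B\cap\mathcal{F}$ inside $\mathcal{F}$. Condition (\ref{bscon1}) is immediate. For (\ref{bscon2}), take $x,y\in B\cap\mathcal{F}$ with $x\wedge^{\mathcal{F}}y\neq\hat{0}$; since meets agree, $x\cap y\neq\emptyset$, so $x\cup y=x\vee^{\mathcal{B}_n}y\in B$ by Proposition \ref{usefulchar} applied to $B$ in $\mathcal{B}_n$. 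The restriction property together with $I(\mathcal{F})\subseteq B$ and Proposition \ref{closurealgo} then gives $x\vee^{\mathcal{F}}y\in\sigma_{\mathcal{B}_n}(B)=B$, hence $x\vee^{\mathcal{F}}y\in B\cap\mathcal{F}$. \emph{$(\subseteq)$} Given $B_{\mathcal{F}}\in\mathbb{B}(\mathcal{F})$, put $B:=\sigma_{\mathcal{B}_n}(B_{\mathcal{F}})$; this is a building set of $\mathcal{B}_n$ with $I(\mathcal{F})\subseteq B_{\mathcal{F}}\subseteq B$, so it suffices to show $B\cap\mathcal{F}=B_{\mathcal{F}}$, and only $\subseteq$ needs argument. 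Running the algorithm of Proposition \ref{closurealgo} on $B_{\mathcal{F}}$, I argue inductively that no step introduces a new element of $\mathcal{F}$: if $a,b$ are present with $a\cap b\neq\emptyset$ and $a\cup b\in\mathcal{F}$, then $a\cup b$ is a face above $a$ and $b$, so $a\cup b=a\vee^{\mathcal{F}}b$; tracing the construction back to members of $B_{\mathcal{F}}$ via the restriction property and applying Proposition \ref{usefulchar} for the building set $B_{\mathcal{F}}$ of $\mathcal{F}$ forces $a\cup b\in B_{\mathcal{F}}$. This induction is the technical heart of part (1); together the two inclusions identify the section $B_{\mathcal{F}}\mapsto\sigma_{\mathcal{B}_n}(B_{\mathcal{F}})$ of $B\mapsto B\cap\mathcal{F}$.

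For part (2), let $B\in\mathbb{B}(\mathcal{B}_n)$ with $I(\mathcal{F})\subseteq B$ and let $N\in\mathcal{N}_{\mathcal{F}}(B\cap\mathcal{F})$; since $B\cap\mathcal{F}\subseteq B$ we have $N\subseteq B$, so I must show $N$ is a nested set of $B$ in $\mathcal{B}_n$. Fix incomparable $x_1,\dots,x_t\in N$. They are pairwise disjoint, since $x_i\cap x_j\neq\emptyset$ would, by Proposition \ref{usefulchar} for $B\cap\mathcal{F}$, put $x_i\vee^{\mathcal{F}}x_j$ into $B\cap\mathcal{F}$, contradicting nestedness. Let $w=\bigvee^{\mathcal{F}}x_i$; nestedness gives $w\notin B\cap\mathcal{F}$, hence $w\notin B$, and Lemma \ref{nestedlem} gives $\text{max}\,(B\cap\mathcal{F})_{\leq w}=\{x_1,\dots,x_t\}$. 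By Lemma \ref{wedgeineqlem}, $\bigcup x_i=\bigvee^{\mathcal{B}_n}x_i\leq w$, and it remains to show $\bigcup x_i\notin B$. If $\bigcup x_i\in\mathcal{F}$ then it is a face above every $x_i$, so $\bigcup x_i=w\notin B$. Otherwise $\bigcup x_i\notin\mathcal{F}$, and from $\bigcup x_i\in B$ I derive a contradiction: the restriction property, applied along the $x_i$, exhibits $w$ inside $\sigma_{\mathcal{B}_n}(I(\mathcal{F})\cup\{\bigcup x_i\})\subseteq B$, contradicting $w\notin B$. Hence $N\in\mathcal{N}(B)$.

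The main obstacle throughout is the gap between $\vee^{\mathcal{F}}$ and $\vee^{\mathcal{B}_n}$: joins in a meet-subsemilattice can be strictly larger, while a building set of $\mathcal{B}_n$ "sees" only unions, so controlling $x\vee^{\mathcal{F}}y$ with respect to $B$ requires genuine input --- the restriction property, together with the hypothesis $I(\mathcal{F})\subseteq B$. Making that property precise, and checking that the Las Vergnas face lattice of the oriented matroids in question satisfies it, is where care is needed; granting it, both parts of the theorem (and a general restriction theorem) follow from the routine applications of Propositions \ref{usefulchar} and \ref{closurealgo} and Lemmas \ref{nestedlem} and \ref{wedgeineqlem} sketched above.
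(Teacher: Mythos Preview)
First, note that the paper does not prove Theorem~\ref{ormatrest}: it is quoted from \cite{MPP} as motivation, and the paper instead proves the general Theorem~\ref{1strest}. So the relevant comparison is between your sketch and the paper's proof of Theorem~\ref{1strest}.

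The key difference is in the abstract hypothesis you isolate. You postulate a \emph{restriction property}: for $x,y\in\mathcal{F}$ with $x\cap y\neq\hat 0$, the face $x\vee^{\mathcal{F}}y$ lies in $\sigma_{\mathcal{B}_n}(I(\mathcal{F})\cup\{x\cup y\})$. The paper instead isolates the \emph{consistency} condition $I(\mathcal{K})_{\le x}\subseteq I(\mathcal{L})_{\le x}$ for every $x\in\mathcal{L}^+$. Consistency is both simpler to state and easier to check (for $\mathcal{K}=\mathcal{B}_n$ it just asks that every singleton contained in a face be an irreducible of $\mathcal{F}$), and it drives short arguments for both parts. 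In particular, the paper's proof of part~(\ref{1stpart}) does not attempt a direct induction on the closure algorithm; instead it uses supersolvability (Proposition~\ref{super}) to peel off a single minimal element $x$ from a minimal counterexample, and then uses consistency to confine the algorithm of Proposition~\ref{closurealgo} below the factors $\{y_1,\dots,y_t\}=\max(B\setminus\{x\})_{\le x}$, so that $x$ can never be reached. For part~(\ref{2ndpart}) the paper argues that if $y=\bigvee^{\mathcal{K}}x_i\in B$ while $x=\bigvee^{\mathcal{L}}x_i\notin B$, then $y$ lies under a single factor $y_i$ of $x$ in $B$, and consistency forces every irreducible of $\mathcal{K}$ below $x$ under that same $y_i$, contradicting $l\ge 2$.

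Your sketch has two genuine gaps that the paper's mechanism avoids. In part~(1)$(\subseteq)$, the inductive step ``if $a,b$ are present with $a\cap b\neq\emptyset$ and $a\cup b\in\mathcal{F}$, then $a\cup b=a\vee^{\mathcal{F}}b$, and tracing back via the restriction property forces $a\cup b\in B_{\mathcal{F}}$'' is not justified: $a,b$ arising in the algorithm need not themselves lie in $\mathcal{F}$, so $a\vee^{\mathcal{F}}b$ is undefined and your restriction property (stated only for $x,y\in\mathcal{F}$) does not apply. In part~(2), when $\bigcup x_i\notin\mathcal{F}$ you invoke the restriction property ``along the $x_i$'' to place $w=\bigvee^{\mathcal{F}}x_i$ in $\sigma_{\mathcal{B}_n}(I(\mathcal{F})\cup\{\bigcup x_i\})$; but the $x_i$ are pairwise disjoint, so the hypothesis $x\cap y\neq\hat 0$ of your restriction property is never met, and no iteration of it produces $w$. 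To make your route work you would need a stronger property relating $\bigvee^{\mathcal{F}}$-joins of arbitrary antichains to $\sigma_{\mathcal{B}_n}$; the paper sidesteps this entirely by using consistency and Lemma~\ref{nestedlem} rather than trying to rebuild $w$ inside $B$.
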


Our primary goal in proving Theorem \ref{1strest} is to establish an unoriented matroid version of Theorem \ref{ormatrest}, which has a separate polyhedral motivation coming from Hodge theory for matroids.\footnote{
 We originally learned the statement of Theorem \ref{ormatrest} from Padrol's lecture at the Simons Center for Geometry and Physics Workshop on ``Combinatorics and Geometry of Convex Polyhedra" in March 2023.  In personal communication, we asked Mantovani-Padrol-Pilaud whether the natural unoriented version of Theorem \ref{ormatrest} should hold, and they said that they had considered, but not investigated this question.  After an earlier draft of this work appeared on arXiv, Matt Larson contacted us to ask whether the unoriented version of Theorem \ref{ormatrest} (now Corollary \ref{maincor}) follows from the results of this paper. Inspired by this independent interest, the authors verified this statement which resulted in  Theorem \ref{1strest} -- we thank Matt Larson for his question.   We thank  Arnau Padrol and Vincent Pilaud for their encouragement with the result of this section, and for giving us their blessing to present it here.  Update from arXiv version 4: The work of Mantovani-Padrol-Pilaud is now available \cite{mantovani2025facial}.}  Although we will not define an oriented matroid, which is not our focus here, we will now define a matroid.  Because the perspective of this article is lattice theoretic, we provide the definition of a matroid in terms of its lattice of flats.

\begin{defn}
    A \emph{matroid} $M$ is an intersection lattice  $(E,\mathcal{L})$  
    such that 
    \begin{itemize}[leftmargin=9pt]
    \item for every $F \in \mathcal{L}$ and $x \in E\setminus F$, there is a unique $G \in \mathcal{L}$ which covers $F$ such that $x \in G$.
    \end{itemize}
The elements of $\mathcal{L}$ are called \emph{flats}. 
\end{defn}

 A matroid is \emph{simple} if each atom of $\mathcal{L}$ is a singleton from $E$.  Given matroids $M_1 =(E,\mathcal{L}_1)$ and $M_2 =(E,\mathcal{L}_2)$, we say that $M_1$ is a \emph{quotient} of $M_2$ if $\mathcal{L}_1 \subseteq \mathcal{L}_2$.  We now recall the definition of a Bergman fan of a matroid with respect to a building set.  
 
 \begin{defn}
     
 Let $M$ be a matroid on ground set $E$ with $|E| =n$.  Let $\mathcal{L}$ be the lattice of flats of $M$ and let $B$ be a building set on $\mathcal{L}$ with $E \in B$\footnote{Some authors do not require building sets for matroids to contain the maximum element when discussing Bergman fans (see \cite{eur2025building, mantovani2025facial}).  Our result readily extends to that level of generality.}.  The \emph{Bergman fan} of $M$ with respect to $B$ is the following collection of polyhedral cones in $\mathbb{R}^{n}\,$\text{:}

$$\Sigma = \{ cone\{\chi_F:F \in N\}+\langle {\bf 1}\rangle_{\mathbb{R}}:N \in \mathcal{N}(B)\},$$

\

where $\chi_F$ is the indicator vector for the flat $F$, and $\langle{\bf 1}\rangle_{\mathbb{R}}$ is the real span of the all ones vector.  If $M$ is the free matroid, i.e. $\mathcal{L}$ is the Boolean lattice, we refer to $\Sigma$ as a \emph{nestohedral fan}.
\end{defn}

Wonderful compactifications of the torus (the complement of the coordinate arrangement) are smooth projective toric varieties, and nestohedral fans are the complete fans associated to these varieties.\footnote{This correspondence holds after first quotienting the Bergman fan by its lineality space $\langle {\bf 1}\rangle_{\mathbb{R}}$.}  Direct combinatorial proofs that nestohedral fans are smooth and projective are given in Postnikov \cite[Proposition 7.10]{postnikov2009permutohedra} and Zelevinsky \cite[Corollary 5.2]{zelevinsky2006nested}.
 Feichtner-Sturmfels \cite[Proposition 3.6]{feichtner2005matroid} prove  that for arbitrary $M$ and $B$, the Bergman fan $\Sigma$ forms a unimodular, but typically incomplete, fan whose support is independent of the choice of $B$.
 The Bergman fan with respect to the maximum building set, as introduced in Ardila-Klivans \cite{ardila2006bergman}, is a subfan of the braid arrangement, i.e. the nestohedral fan with respect to the maximum building set.  Several researchers have presented interesting nested set complex structures for Bergman fans as restrictions of nestohedral fans: Braden-Huh-Matherne-Proudfoot-Wang describe the augmented Bergman fan as a subfan of the stellahedral fan \cite[Proposition 2.6]{braden2022semi},  Crowley-Huh-Larson-Simpson-Wang describe the polymatroid Bergman fan as a subfan of the polypermutahedral fan \cite[Definition 1.7]{crowley2022bergman}, and Eur-Larson describe the augmented Bergman fan of a polymatroid as a subfan of the polystellahedral fan \cite[Definition 3.7]{eur2023intersection}.

We offer a unification and generalization of the above results.  

\begin{defn}
Let $\mathcal{L}$ and $\mathcal{K}$ be meet-semilattices and let $\mathcal{L} \hookrightarrow \mathcal{K}$ be a meet-semilattice order embedding. We say that the embedding $\mathcal{L} \hookrightarrow \mathcal{K}$ is \emph{consistent} if 

\begin{itemize}
    \item for every $x \in \mathcal{L}^+$, we have  $I(\mathcal{K})_{\leq_{\mathcal{K}} x} \subseteq I(\mathcal{L})_{\leq_{\mathcal{L}} x}$.
\end{itemize}

\end{defn}

Observe that if $\hookrightarrow$ is a consistent meet-semilattice order embedding, then $\hat{0}_{\mathcal{L}}=\hat{0}_{\mathcal{K}}$, which we may denote by $\hat{0}$.  The following is the main result of this section.

\begin{thm}\label{1strest}
    Let $\mathcal{L}$ and $\mathcal{K}$ be meet-semilattices and let $\mathcal{L} \hookrightarrow \mathcal{K}$ be a consistent meet-semilattice order embedding. Then
    
    \begin{enumerate}
    \item\label{1stpart} $\mathbb{B}(\mathcal{L})=\{B\cap 
    \mathcal{L}^+:B\in \mathbb{B}(\mathcal{K})\, \,,\,I(\mathcal{L}) \subseteq B \}.$
    \item\label{2ndpart}  If $B \in  \mathbb{B}(\mathcal{K})$ and $I(\mathcal{L}) \subseteq B$, then $\mathcal{N}_{\mathcal{L}}(B \cap \mathcal{L}^+)\subseteq \mathcal{N}_{\mathcal{K}}(B).$
    \end{enumerate}

\end{thm}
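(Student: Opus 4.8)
The plan is to use the characterization of building sets from Proposition~\ref{usefulchar} throughout, since it reduces everything to two local conditions ($I(\mathcal{L})\subseteq B$ and closure under existing joins of meeting pairs). For part~(\ref{1stpart}), I would prove the two inclusions separately.

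For the inclusion $\{B\cap\mathcal{L}^+ : B\in\mathbb{B}(\mathcal{K}),\, I(\mathcal{L})\subseteq B\}\subseteq\mathbb{B}(\mathcal{L})$, let $B\in\mathbb{B}(\mathcal{K})$ with $I(\mathcal{L})\subseteq B$ and set $B_{\mathcal{L}} := B\cap\mathcal{L}^+$. Condition~(\ref{bscon1}) is immediate since $I(\mathcal{L})\subseteq B$ and $I(\mathcal{L})\subseteq\mathcal{L}^+$. For condition~(\ref{bscon2}), take $x,y\in B_{\mathcal{L}}$ with $x\wedge y\neq\hat{0}$ and suppose $x\vee^{\mathcal{L}} y\in\mathcal{L}$. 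Here I must be careful: the join that matters for $B_{\mathcal{L}}$ being a building set of $\mathcal{L}$ is $x\vee^{\mathcal{L}} y$, but what I know is that $B$ is a building set of $\mathcal{K}$, which controls $x\vee^{\mathcal{K}} y$. By Lemma~\ref{wedgeineqlem}, $x\vee^{\mathcal{K}} y \leq x\vee^{\mathcal{L}} y$, and since $x\vee^{\mathcal{L}} y\in\mathcal{L}\subseteq\mathcal{K}$ is an upper bound for $x,y$ in $\mathcal{K}$ while $x\vee^{\mathcal{K}} y$ is the least such, we in fact get $x\vee^{\mathcal{K}} y\leq x\vee^{\mathcal{L}} y$ with $x\vee^{\mathcal{K}} y\in\mathcal{K}$. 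I then want to argue $x\vee^{\mathcal{K}} y = x\vee^{\mathcal{L}} y$; this is where consistency enters. One route: $x\vee^{\mathcal{L}} y\in\mathcal{L}^+$, so every element of $I(\mathcal{K})_{\leq x\vee^{\mathcal{L}} y}$ lies in $I(\mathcal{L})_{\leq x\vee^{\mathcal{L}} y}$; combined with the factorization of the interval $[\hat 0, x\vee^{\mathcal{L}} y]$ via $B$ in $\mathcal{K}$ (Definition~\ref{main}) and the fact that $x,y$ are joins of their elementary divisors in $\mathcal{L}$, all the irreducible pieces below $x\vee^{\mathcal{L}} y$ already appear below $x$ or $y$, forcing $x\vee^{\mathcal{K}} y$ to dominate all of them and hence equal $x\vee^{\mathcal{L}} y$. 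Then $x\vee^{\mathcal{K}} y\in B$ by Proposition~\ref{usefulchar} applied in $\mathcal{K}$, and since this element lies in $\mathcal{L}^+$, it lies in $B_{\mathcal{L}}$, as desired.

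For the reverse inclusion $\mathbb{B}(\mathcal{L})\subseteq\{B\cap\mathcal{L}^+:\ldots\}$, I would take $B_{\mathcal{L}}\in\mathbb{B}(\mathcal{L})$ and construct a building set of $\mathcal{K}$ that restricts to it. The natural candidate is $B := \sigma_{\mathcal{K}}(B_{\mathcal{L}}\cup I(\mathcal{K}))$, the closure in $\mathbb{B}(\mathcal{K})$, computed explicitly by the iterative join procedure of Proposition~\ref{closurealgo}. Then $I(\mathcal{K})\subseteq B$ automatically, and certainly $I(\mathcal{L})\subseteq B$ since $I(\mathcal{L})\subseteq B_{\mathcal{L}}$. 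The crux is showing $B\cap\mathcal{L}^+ = B_{\mathcal{L}}$: the inclusion $\supseteq$ is clear, and for $\subseteq$ I would show inductively, following the layers $X_i$ of Proposition~\ref{closurealgo}, that any new element produced that happens to land in $\mathcal{L}^+$ was already in $B_{\mathcal{L}}$ — using at each step that if $a,b\in\mathcal{L}^+$ with $a\wedge b\neq\hat 0$ and $a\vee^{\mathcal{K}} b\in\mathcal{L}$, then (by the equality of joins argument above, plus consistency) $a\vee^{\mathcal{K}} b = a\vee^{\mathcal{L}} b\in B_{\mathcal{L}}$, while elements not in $\mathcal{L}$ don't affect the intersection, and elements coming purely from $I(\mathcal{K})$ that land in $\mathcal{L}^+$ are in $I(\mathcal{L})\subseteq B_{\mathcal{L}}$ by consistency. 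Tracking that the $\mathcal{K}$-joins of things outside $\mathcal{L}$ cannot re-enter $\mathcal{L}$ to produce something unexpected is the delicate bookkeeping here.

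For part~(\ref{2ndpart}), let $B\in\mathbb{B}(\mathcal{K})$ with $I(\mathcal{L})\subseteq B$ and let $N\in\mathcal{N}_{\mathcal{L}}(B\cap\mathcal{L}^+)$; I want $N\in\mathcal{N}_{\mathcal{K}}(B)$. First, $N\subseteq B\cap\mathcal{L}^+\subseteq B$, so $N$ is a subset of the right ground set. The nested condition requires that for incomparable $x_1,\ldots,x_t\in N$ with $t\geq 2$, $\bigvee^{\mathcal{K}} x_i\in\mathcal{K}\setminus B$. I would argue by contradiction: if $\bigvee^{\mathcal{K}} x_i\in B$, then since $N$ is nested in $B\cap\mathcal{L}^+$ over $\mathcal{L}$ we know $\bigvee^{\mathcal{L}} x_i\notin B\cap\mathcal{L}^+$, but we can distinguish cases on whether $\bigvee^{\mathcal{L}} x_i$ exists in $\mathcal{L}$. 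If it does, then the equality-of-joins argument gives $\bigvee^{\mathcal{K}} x_i=\bigvee^{\mathcal{L}} x_i$ — wait, that needs $x\wedge y$ conditions; here I should instead invoke Lemma~\ref{nestedlem} applied in $\mathcal{K}$: if $\bigvee^{\mathcal{K}} x_i\in B$ with the $x_i$ incomparable in $B$, then $\operatorname{max} B_{\leq\bigvee^{\mathcal{K}} x_i}=\{x_1,\ldots,x_t\}$, which would force the interval $[\hat 0,\bigvee^{\mathcal{K}} x_i]$ to be a nontrivial product and, descending via consistency to $\mathcal{L}$, contradict nestedness of $N$ over $\mathcal{L}$ (or contradict that $\bigvee^{\mathcal{L}} x_i\notin B\cap\mathcal{L}^+$). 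The subtle point, and I expect the main obstacle for the whole theorem, is precisely this comparison of $\mathcal{L}$-joins and $\mathcal{K}$-joins: consistency is exactly the hypothesis that should make $\bigvee^{\mathcal{K}} x_i$ coincide with (or at least be controlled by) $\bigvee^{\mathcal{L}} x_i$ for elements of $\mathcal{L}^+$, and getting this clean lemma stated and proved once — probably as a short lemma that for $x,y\in\mathcal{L}^+$ with $x\vee^{\mathcal{L}} y\in\mathcal{L}$ one has $x\vee^{\mathcal{K}} y = x\vee^{\mathcal{L}} y$ under a consistent embedding — will make both parts fall out cleanly.
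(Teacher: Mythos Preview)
Your central ``clean lemma'' is false, and since both parts of your argument are organized around it, this is a genuine gap. Concretely: take $\mathcal{K}=\mathcal{B}_4$ the Boolean lattice on $\{1,2,3,4\}$ and $\mathcal{L}$ the lattice of flats of the uniform matroid $U_{3,4}$, with the natural embedding (which is consistent). For $x=\{1,2\}$ and $y=\{2,3\}$ in $\mathcal{L}^+$ we have $x\wedge y=\{2\}\neq\hat 0$, yet $x\vee^{\mathcal{K}}y=\{1,2,3\}$ while $x\vee^{\mathcal{L}}y=\{1,2,3,4\}$. So the equality $x\vee^{\mathcal{K}}y = x\vee^{\mathcal{L}}y$ fails even with a nontrivial meet; the step ``all the irreducible pieces below $x\vee^{\mathcal{L}}y$ already appear below $x$ or $y$'' is simply wrong (the atom $\{4\}$ is below neither). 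In part~(\ref{2ndpart}) there is a second problem: you propose to apply Lemma~\ref{nestedlem} in $\mathcal{K}$ to the antichain $\{x_1,\dots,x_t\}$, but that lemma requires the antichain to already be nested in $\mathcal{K}$, which is exactly what you are trying to prove.

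The paper never asserts equality of the two joins. For the inclusion $\{B\cap\mathcal{L}^+\}\subseteq\mathbb{B}(\mathcal{L})$ the crucial observation, reached via a supersolvability/minimal-counterexample reduction, is weaker and provable: if $y,z\in B\cap\mathcal{L}^+$ satisfy $y\wedge z\neq\hat 0$ and $y\vee^{\mathcal{L}}z=x\notin B$, one factors $x$ in $\mathcal{K}$ as $\text{max}\,B_{\leq x}=\{w_1,\dots,w_k\}$ with $k\ge 2$, notes $y,z\le w_1$, and then shows each elementary divisor $x_s\in\text{max}\,I(\mathcal{L})_{\leq x}$ has $x_s\wedge y\neq\hat 0$ or $x_s\wedge z\neq\hat 0$ (from $y\vee^{\mathcal{L}}z=x$ and the product decomposition of $[\hat 0,x]$ in $\mathcal{L}$), forcing every $x_s\le w_1$; consistency then pushes $I(\mathcal{K})_{\leq x}$ under $w_1$ too, contradicting $k\ge 2$. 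For the other inclusion and for part~(\ref{2ndpart}), the paper again works with the factor decomposition below an element of $\mathcal{L}^+$ and uses consistency to trap $I(\mathcal{K})$ inside a single factor, applying Lemma~\ref{nestedlem} only in $\mathcal{L}$ (where nestedness is already known). Your outline can be repaired along these lines, but only after abandoning the equality-of-joins lemma in favor of the ``all $\mathcal{K}$-irreducibles fall into one $B$-factor'' argument.
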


\

Let $\mathcal{L}$ be an intersection lattice on the ground set $[n]$ such that each atom is a singleton. Take $\mathcal{K}=\mathcal{B}_n$, the Boolean lattice, 
 and $\mathcal{L}\hookrightarrow \mathcal{B}_n$ the natural embedding.  Then $\hookrightarrow$ is a consistent meet-semilattice order embedding.  Thus we have the following Corollary \ref{maincor}.

\begin{cor}\label{maincor}
    Let $M$ be a simple matroid with lattice of flats $\mathcal{L}$ and take $B$ a building set for $\mathcal{L}$ with $E\in B$.    Let  $\Sigma$ be the Bergman fan associated to $M$ and $B$.  There exists a nestohedral fan $\Sigma'$ such that $\Sigma$ is a subfan of $\Sigma'$.
\end{cor}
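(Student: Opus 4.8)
The plan is to deduce Corollary~\ref{maincor} directly from Theorem~\ref{1strest} by exhibiting the appropriate consistent meet-semilattice order embedding and then unwinding the definitions of the associated fans. First I would take $\mathcal{K} = \mathcal{B}_n$, the Boolean lattice on $[n]$, and $\mathcal{L}$ the lattice of flats of $M$. Since $M$ is simple, each atom of $\mathcal{L}$ is a singleton from $[n]$, so $\mathcal{L}$ is literally an intersection lattice on the ground set $[n]$ and the inclusion $\mathcal{L} \hookrightarrow \mathcal{B}_n$ is the natural one: it is an order embedding, and meets in both posets are given by set intersection, so it is a meet-semilattice order embedding. The consistency condition requires that for every flat $x \in \mathcal{L}^+$ we have $I(\mathcal{B}_n)_{\leq x} \subseteq I(\mathcal{L})_{\leq x}$; since the irreducibles of the Boolean lattice are exactly the singletons and every singleton $\{i\} \leq x$ is an atom of $\mathcal{L}$, hence irreducible in $\mathcal{L}$, this holds. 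So the embedding is consistent.

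Next I would apply Theorem~\ref{1strest}\eqref{1stpart}: given the building set $B$ on $\mathcal{L}$, there exists a building set $\widetilde{B} \in \mathbb{B}(\mathcal{B}_n)$ with $I(\mathcal{L}) \subseteq \widetilde{B}$ and $B = \widetilde{B} \cap \mathcal{L}^+$. (Concretely one can take $\widetilde{B} = \sigma_{\mathcal{B}_n}(B \cup I(\mathcal{B}_n))$ via Proposition~\ref{closurealgo}, but Theorem~\ref{1strest} already supplies the existence.) Let $\Sigma'$ be the nestohedral fan associated to the free matroid on $[n]$ and $\widetilde{B}$, and $\Sigma$ the Bergman fan associated to $M$ and $B$. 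By Theorem~\ref{1strest}\eqref{2ndpart}, every nested set of $B$ in $\mathcal{L}$ is a nested set of $\widetilde{B}$ in $\mathcal{B}_n$, i.e.\ $\mathcal{N}_{\mathcal{L}}(B) \subseteq \mathcal{N}_{\mathcal{B}_n}(\widetilde{B})$.

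Finally I would translate this containment of nested set complexes into a containment of fans. For each $N \in \mathcal{N}_{\mathcal{L}}(B)$, the cone $\operatorname{cone}\{\chi_F : F \in N\} + \langle \mathbf{1}\rangle_{\mathbb{R}}$ appearing in $\Sigma$ is literally the same cone in $\mathbb{R}^n$ as the one appearing in $\Sigma'$ for the same set $N$ viewed inside $\mathcal{N}_{\mathcal{B}_n}(\widetilde{B})$ — this uses that the embedding is the identity on ground sets, so the indicator vector $\chi_F$ of a flat $F$ equals the indicator vector of the corresponding subset of $[n]$. Hence every cone of $\Sigma$ is a cone of $\Sigma'$; since both are fans, $\Sigma$ is a subfan of $\Sigma'$. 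I do not anticipate a serious obstacle here: the only points requiring a line of care are verifying consistency of the embedding (immediate from simplicity of $M$) and checking that the cone assigned to a nested set does not depend on whether we read it in $\mathcal{L}$ or in $\mathcal{B}_n$ (immediate since $\chi_F$ is computed in the common ambient space $\mathbb{R}^n$). The substance of the corollary is entirely carried by Theorem~\ref{1strest}.
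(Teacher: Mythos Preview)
Your proposal is correct and follows exactly the route the paper indicates: it sets $\mathcal{K}=\mathcal{B}_n$, verifies that the natural inclusion $\mathcal{L}\hookrightarrow\mathcal{B}_n$ is a consistent meet-semilattice order embedding (using simplicity of $M$), and then deduces the corollary from Theorem~\ref{1strest}. You spell out the final translation from nested-set containment to subfan containment more explicitly than the paper does, but this is only an expansion of what the paper treats as immediate.
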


\begin{proof}[Proof of Theorem \ref{1strest}]
(\ref{1stpart}): Because $\hookrightarrow$ is a meet-semilattice order embedding, we can identify $\leq_{\mathcal{K}}$ with $\leq_{\mathcal{L}}$, and $\wedge^{\mathcal{K}}$ with $\wedge^{\mathcal{L}}$, which we will denote by $\leq$ and $\wedge$, respectively, throughout the proof.   Let $\sigma$ be the building set closure operator associated to $\mathcal{K}$.   We begin by proving that  $\mathbb{B}(\mathcal{L})= \{ \sigma(B)\cap\mathcal{L}^+ :B \in \mathbb{B}(\mathcal{L})\}.$  Let 
$\mathbb{X} = \{ B \in \mathbb{B}(\mathcal{L}):\sigma(B)\cap\mathcal{L}^+ = B\}.$  
We would like to show that $\mathbb{X} = \mathbb{B}(\mathcal{L})$.    Suppose for contradiction that there exists some $A \in  \mathbb{B}(\mathcal{L})\setminus \mathbb{X}$.  We know that $\mathbb{X}$ is nonempty as $\mathcal{K}^+\cap \mathcal{L}^+=\mathcal{L}^+ \in \mathbb{X}$, hence there exists a containment minimal element $B \in \mathbb{X}$ such that $A \subsetneq B$.   
 By the supersolvability of $\mathbb{B}(\mathcal{L})$ applied to the pair of building sets $A$ and $B$, we know that for any linear extension $<$ of $\mathcal{L}$ and $x =\text{min}_{<}(B\setminus A)$ that $B\setminus \{x\} \in \mathbb{B}(\mathcal{L})$.  
    By our minimality assumption on $B$, we have $\sigma(B\setminus \{x\})\cap\mathcal{L}^+\neq B\setminus \{x\}$.  So there exists some element $y \in \sigma(B\setminus \{x\})\cap\mathcal{L}^+$ and $y \notin B\setminus \{x\}$.  By condition (\ref{closure2}) of Definition \ref{closure}, we have that $B\setminus \{x\} \subsetneq \sigma(B\setminus \{x\})\cap\mathcal{L}^+ \subseteq \sigma(B)\cap\mathcal{L}^+ = B$. 
 Thus $\sigma(B\setminus \{x\})\cap\mathcal{L}^+=B$ and $y  = x$.  

Let $\{y_1, \ldots, y_t\} = \text{max} (B\setminus \{x\})_{\leq x}$.  In what follows, we will employ the description of $\sigma(B\setminus \{x\})$ given in Proposition \ref{closurealgo}.  Let $X_0= (B\setminus \{x\}) \cup I(\mathcal{K})$.  
Because $\hookrightarrow$ is consistent, and $I(\mathcal{L})\subseteq B\setminus \{x\}$ as $B\setminus \{x\}\in \mathbb{B}(\mathcal{L})$, we know that $(X_0)_{\leq x}= (B\setminus \{x\})_{\leq x}$.  We utilize induction for establishing that for every $j \in \mathbb{Z}_{\geq 0}$, the collection $\{ (X_j)_{\leq y_r}\setminus \{\hat{0}\}:1\leq r\leq t\}$ forms a partition of $(X_j)_{\leq x}\setminus \{\hat{0}\}$, i.e. the following two statements hold:

\begin{enumerate}
    \item\label{firstinductivecondition} For every $z \in (X_j)_{\leq x}$, there exists some $y_r$ such that $z\leq y_r$. 
    \item\label{secondinductivecondition} If $y_r\neq y_s$ then $(X_j)_{\leq y_r} \cap (X_j)_{\leq y_s}= \{\hat{0}\}$.
\end{enumerate}  

Statement (\ref{firstinductivecondition}) for $(X_0)_{\leq x}$ follows by the definition of the $\{y_r\}$.  Statement (\ref{secondinductivecondition}) for $(X_0)_{\leq x}$ is a consequence of the fact that $ B\setminus \{x\}$ is a building set for $\mathcal{L}$.
Now assume that the statements (\ref{firstinductivecondition})  and (\ref{secondinductivecondition})  hold for some $(X_i)_{\leq x}$. Let $z \in (Y_i)_{\leq x}$ and $a,b \in (X_i)_{\leq x}$ such that $a\wedge b \neq \hat{0}$ with $a\vee_{\mathcal{K}} b =z$.  By statement (\ref{secondinductivecondition}) for $(X_i)_{\leq x}$, it must be that there exists some $r$ such that $a,b \leq y_r$, hence $a\vee_{\mathcal{K}}b \leq y_r$ implying statement (\ref{firstinductivecondition}) for $(X_{i+1})_{\leq x}$.  Furthermore, it is clear that for any $s\neq r$, we have that $a\vee b \nleq y_s$, otherwise we would have that $a,b \leq y_s$, contradicting statement (\ref{secondinductivecondition}) for $(X_i)_{\leq x}$.   Thus statement (\ref{secondinductivecondition}) holds for $(X_{i+1})_{\leq x}$.   Taking $j$ large enough so that $X_j = \sigma(B\setminus \{x\})$, we find that for any $z \in \sigma(B\setminus \{x\})_{\leq x}$, there exists some $r$ such that $z\leq y_r$, but this contradicts the above assumption that $x \in \sigma(B\setminus \{x\})_{\leq x}$.  This concludes the proof that $\mathbb{X} = \mathbb{B}(\mathcal{L})$.

The argument above establishes that $\mathbb{B}(\mathcal{L}) \subseteq \{B\cap 
\mathcal{L}^+:B\in \mathbb{B}(\mathcal{K}), I(\mathcal{L})\subseteq B\}$, thus to complete the proof of (\ref{1stpart}) from Theorem \ref{1strest}, we will demonstrate the opposite inclusion.   Let $\mathbb{Y} = \{ B \in \mathbb{B}(\mathcal{K}):I(\mathcal{L})\subseteq B \, , \, B\cap\mathcal{L}^+ \in \mathbb{B}(\mathcal{L})\}.$ We will show that $\mathbb{Y} =  \{B \in \mathbb{B}(\mathcal{K}):I(\mathcal{L})\subseteq B\}$.  Suppose for contradiction that $\mathbb{Y} \neq  \{B \in \mathbb{B}(\mathcal{K}):I(\mathcal{L})\subseteq B\}$ and let $A \in\{B \in \mathbb{B}(\mathcal{K}):I(\mathcal{L})\subseteq B\} \setminus \mathbb{Y}$.  We know that $\mathcal{K}^+
\in \mathbb{Y} $, so $\mathbb{Y}$ is nonempty, hence there exists a containment minimal building set $B \in \mathbb{Y}$ such that $A \subsetneq B$.    By the supersolvability of $\mathbb{B}(\mathcal{K})$ applied to the pair of building sets $A$ and $B$, we know that for any linear extension $<$ of $\mathcal{K}$ and $x =\text{min}_{<}(B\setminus A)$, we have $B\setminus \{x\} \in \mathbb{B}(\mathcal{K})$.  By our minimality assumption on $B$, we have that $(B\setminus \{x\})\cap \mathcal{L}^+ \notin \mathbb{B}(\mathcal{L})$.  We know that $I(\mathcal{L})\subseteq A$, hence $x \notin I(\mathcal{L})$ and $I(\mathcal{L}) \subseteq (B\setminus \{x\})\cap \mathcal{L}^+$.  
 By Proposition \ref{extremebuild}, there must exist some $y,z \in (B\setminus \{x\})\cap \mathcal{L}^+$ such that $y\wedge z \neq \hat{0}$ and $y\vee^{\mathcal{L}} z = x$. 
We know that $B\setminus \{x\} \in \mathbb{B}(\mathcal{K})$.  Take $\{w_1, \ldots,w_k\} = \text{max}\, (B\setminus \{x\})_{\leq x}$, and recall that for each $1\leq i<j \leq k$, we have that $[\hat{0},w_i]\cap[\hat{0},w_j] = \hat{0}$.  Note that since $x \notin B\setminus \{x\}$, we must have $k\geq 2$.  
Also, if $u \in (B\setminus \{x\})_{\leq x}$ and $u \wedge w_i \neq \hat{0}$, then $u\leq w_i$.  As $y,z \in B\setminus \{x\}$ and $\hat{0} \neq y\wedge z$, we know that $y\vee^{\mathcal{K}} z \in B\setminus \{x\}$.  We may assume without loss of generality that $y\vee^{\mathcal{K}} z \leq w_1$.  Let $\{x_1, \ldots, x_t\} = \text{max}\, I(\mathcal{L})_{\leq x}\subseteq B\setminus \{x\}$. 
Suppose that for some $s$ with $1\leq s\leq t$, we have $x_s \wedge y = x_s \wedge z = \hat{0}$.  Because $I(\mathcal{L}) \in \mathbb{B}(\mathcal{L})$, we know that for $1\leq i\leq t$ there exist $y_i,z_i \leq x_i$ such that $\vee_{1\leq i \leq t}^{\mathcal{L}} y_i = y$ and $\vee_{1\leq i \leq t}^{\mathcal{L}} z_i = z$.  By assumption, $y_s = z_s = \hat{0}$, so $y \vee^{\mathcal{L}} z \leq \vee_{i \neq s}^{\mathcal{L}} x_i < \vee_{1\leq i \leq t}^{\mathcal{L}} x_i  = x$, a contradiction.
Thus for each $i$, the intersection $x_i \wedge y \neq \hat{0}$ or  $x_i \wedge z \neq \hat{0}$, hence $x_i \wedge w_1 \neq \hat{0}$.  As $x_i \in (B\setminus \{x\})_{\leq x}$, we must have $x_i \leq w_1$, therefore $I(\mathcal{L})_{\leq x} \subseteq [\hat{0},w_1]$.  Because $\mathcal{L} \hookrightarrow \mathcal{K}$ is consistent, $I(\mathcal{K})_{\leq x} \subseteq I(\mathcal{L})_{\leq x}$.  We conclude that $I(\mathcal{K})_{\leq x}\subseteq [\hat{0}, w_1]$, implying $k=1$, a contradiction.

   (\ref{2ndpart}):  
Suppose for contradiction that $B \in  \mathbb{B}(\mathcal{K})$, and $I(\mathcal{L}) \subseteq B$, but $\mathcal{N}_{\mathcal{L}}(B \cap \mathcal{L}^+)\nsubseteq \mathcal{N}_{\mathcal{K}}(B).$  Let $N \in \mathcal{N}_{\mathcal{L}}(B \cap \mathcal{L}^+)$ such that $N \notin \mathcal{N}_{\mathcal{K}}(B)$.  There exists some $x_1, \ldots, x_k \in N$ which are incomparable, such that $y=\bigvee_{1\leq i\leq k}^{\mathcal{K}} x_i \in B$ but $x=\bigvee_{1\leq i\leq k}^{\mathcal{L}} x_i \in \mathcal{L}^+ \setminus B$. Let $\{y_1, \ldots, y_l \}= 
\text{max}\,B_{\leq x}$.  Then $l\geq 2$ and the intervals $[\hat{0},y_i]$ intersect at $\hat{0}$.  By Lemma \ref{wedgeineqlem}, $y \leq x$, thus there exists exactly one $i$ such that $y \leq y_i$.  
But then by Lemma \ref{nestedlem} applied to $x_1, \ldots, x_k$, we know that for any $x' \in (B \cap \mathcal{L}^+)_{\leq x}$,  there exists some $x_t$ such that $x' \leq x_t$, hence $x' \leq y \leq y_i$.  It follows that for any $y_j$ with $j \neq i$, we have that $(B\cap \mathcal{L}^+ )_{\leq y_j} = \emptyset$.  On the other hand, there must exist some $y' \in I(\mathcal{K})_{\leq y_j}$. 
  Clearly, $I(\mathcal{K})_{\leq y_j} \subseteq I(\mathcal{K})_{\leq x}$, and by our assumption that $\mathcal{L} \hookrightarrow \mathcal{K}$ is consistent, we must have that $y' \in I(\mathcal{L})_{\leq x} \subseteq (B\cap \mathcal{L}^+ )_{\leq y_i}$, a contradiction.

\end{proof}

\section{Acknowledgements}

 We thank Nate Bottman, Federico Castillo, Graham Denham, Alexei Oblomkov, Arnau Padrol, Vincent Pilaud, Dasha Poliakova, and Raman Sanyal for helpful discussions about building sets, and we thank Bryan Gillespie for carefully explaining his work during the 2017 MSRI Semester on ``Geometric and Topological Combinatorics''.  Some of our discussions took place at the Simons Center for Geometry and Physics Workshop on ``Combinatorics and Geometry of Convex Polyhedra" in March 2023.  S.B. thanks the MSRI (now SLMath) and the Simons Center for their hospitality and excellent working conditions.  We thank Christin Bibby, Graham Denham, Matt Larson, Arnau Padrol, Vincent Pilaud, Vic Reiner, and Raman Sanyal for their helpful feedback on an earlier version of this article.  We also thank an anonymous referee for feedback that helped improve this article.

\bibliographystyle{abbrv}
\bibliography{references.bib}
\end{document}